\documentclass[14pt, dvipsnames]{extarticle}
\usepackage[left=2cm, top=2cm, right=2cm, bottom=20mm]{geometry} 

\usepackage[T1]{fontenc}
\usepackage[cmintegrals,cmbraces]{newtxmath}
\usepackage[lining]{ebgaramond}
\usepackage{ebgaramond-maths}
\usepackage{euscript}
\usepackage{xcolor}
\usepackage{graphicx}
\usepackage[figurename=Fig.]{caption}

\usepackage[tracking = true, letterspace = 50]{microtype}
\SetTracking{encoding=T1, shape=sc}{50}
\SetTracking{encoding=T1}{50}

\usepackage{amsthm}
\usepackage{csquotes}
\usepackage{hyperref}
\usepackage{xurl}

\usepackage[
 backend=biber,
 style=alphabetic,
 sorting=nyt,
 giveninits=true,
 maxbibnames=99,
 doi=true,
 url=true,
 isbn=false,
 eprint=true
]{biblatex}
\AtBeginBibliography{\normalsize\raggedright}
\renewbibmacro{in:}{}

\usepackage[explicit]{titlesec}
\usepackage{pgfornament}

\newcommand{\sectionflourish}{%
  \pgfornament[width=3cm]{87}%
}

\titleformat{\section}[block]
  {\normalfont\Large\bfseries\filcenter} 
  {}
  {0pt}
  {\thesection.\hspace{0.45em}#1}
  [\vspace{-0.15em}\sectionflourish]

\titleformat{name=\section,numberless}[block]
  {\normalfont\Large\bfseries\filcenter}
  {}
  {0pt}
  {#1}
  [\vspace{-0.15em}\sectionflourish]

\titlespacing*{\section}
  {0pt}
  {3ex plus 0.3ex minus 0.2ex}
  {2.5ex}

\theoremstyle{plain}

\newtheorem{theorem}{Theorem}[section]
\newtheorem{prop}[theorem]{Proposition}
\newtheorem{lemma}[theorem]{Lemma}
\newtheorem{coroll}[theorem]{Corollary}

\newtheorem*{remark*}{Remark}

\theoremstyle{definition}

\newtheorem{defi}[theorem]{Definition}

\newtheorem{remark}[theorem]{Remark}

\usepackage{enumerate}

\usepackage{etoolbox}
\patchcmd{\thmhead}{(#3)}{#3}{}{}
\makeatletter
\renewenvironment{proof}[1][\proofname]{
  \par\pushQED{\qed}\normalfont
  \topsep6\p@\@plus6\p@\relax
  \trivlist\item[\hskip\labelsep\bfseries\itshape #1\@addpunct{.}]
}{\popQED\endtrivlist\@endpefalse}
\makeatother

\let\phi\varphi

\DeclareMathOperator{\Bb}{\EuScript{B}}

\DeclareMathOperator{\pt}{\mathrm{pt}}

\DeclareMathOperator{\Sss}{\mathbb{S}}

\DeclareMathOperator{\disc}{\mathrm{disc}}

\DeclareMathOperator{\pr}{\mathrm{pr}}

\DeclareMathOperator{\Q}{\mathbb{Q}}

\DeclareMathOperator{\Ker}{\mathrm{Ker}}

\DeclareMathOperator{\G}{\mathbb{G}}

\DeclareMathOperator{\inv}{\mathrm{inv}}

\DeclareMathOperator{\Z}{\mathbb{Z}}
\DeclareMathOperator{\Cc}{\mathbb{C}}

\DeclareMathOperator{\CP}{\mathbb{C}P}
\DeclareMathOperator{\Hhh}{\mathrm{H}}

\newcommand{\dd}{\mathrm{d}}

\DeclareMathOperator{\Sp}{\mathrm{Sp}}
\DeclareMathOperator{\Uu}{\mathrm{U}}

\DeclareMathOperator{\MSp}{\mathrm{MSp}}

\DeclareMathOperator{\MU}{\mathrm{MU}}

\DeclareMathOperator{\HP}{\mathbb{H}\mathit{P}}

\DeclareMathOperator{\SO}{\mathrm{SO}}

\DeclareMathOperator{\Kr}{\mathrm{Kr}}

\DeclareMathOperator{\wpp}{\tilde{\wp}}
\DeclareMathOperator{\Bc}{\mathrm{Bc}}
\DeclareMathOperator{\Buc}{\mathrm{Buc}}
\DeclareMathOperator{\Ff}{\EuScript{F}}

\DeclareMathOperator{\Pic}{\mathrm{Pic}}

\DeclareMathOperator{\ch}{\mathrm{ch}}
\DeclareMathOperator{\Wt}{\mathrm{Wt}}
\DeclareMathOperator{\Lll}{\EuScript{L}}

\makeatletter
  \DeclareSymbolFont{ntxletters}{OML}{ntxmi}{m}{it}
  \SetSymbolFont{ntxletters}{bold}{OML}{ntxmi}{b}{it}
  \re@DeclareMathSymbol{\leftharpoonup}{\mathrel}{ntxletters}{"28}
  \re@DeclareMathSymbol{\leftharpoondown}{\mathrel}{ntxletters}{"29}
  \re@DeclareMathSymbol{\rightharpoonup}{\mathrel}{ntxletters}{"2A}
  \re@DeclareMathSymbol{\rightharpoondown}{\mathrel}{ntxletters}{"2B}
  \re@DeclareMathSymbol{\triangleleft}{\mathbin}{ntxletters}{"2F}
  \re@DeclareMathSymbol{\triangleright}{\mathbin}{ntxletters}{"2E}
  \re@DeclareMathSymbol{\partial}{\mathord}{ntxletters}{"40}
  \re@DeclareMathSymbol{\flat}{\mathord}{ntxletters}{"5B}
  \re@DeclareMathSymbol{\natural}{\mathord}{ntxletters}{"5C}
  \re@DeclareMathSymbol{\star}{\mathbin}{ntxletters}{"3F}
  \re@DeclareMathSymbol{\smile}{\mathrel}{ntxletters}{"5E}
  \re@DeclareMathSymbol{\frown}{\mathrel}{ntxletters}{"5F}
  \re@DeclareMathSymbol{\sharp}{\mathord}{ntxletters}{"5D}
  \re@DeclareMathAccent{\vec}{\mathord}{ntxletters}{"7E}
  \DeclareSymbolFont{cmletters}{OML}{cmm}{m}{it}
  \SetSymbolFont{cmletters}{bold}{OML}{cmm}{b}{it}
  \re@DeclareMathSymbol{\wp}{\mathord}{cmletters}{"7D}
\makeatother

\renewcommand{\epsilon}{\varepsilon}

\DeclareFontFamily{U}{EBSUB}{}
\DeclareFontShape{U}{EBSUB}{m}{it}{<-> EBGaramond-Italic-tlf-lgr}{}
\DeclareFontFamily{U}{EBSUB}{}
\DeclareSymbolFont{EBSUB}{U}{EBSUB}{m}{it}

\DeclareMathSymbol{\mu}{\mathord}{EBSUB}{`m}
\DeclareMathSymbol{\varDelta}{\mathord}{EBSUB}{`D}
\DeclareMathSymbol{\varOmega}{\mathord}{EBSUB}{`W}

 \hypersetup{
    colorlinks,
    linkcolor={red!50!black},
    citecolor={blue!50!black},
    urlcolor={blue!80!black}
}

\usepackage{stackrel}
\usepackage[all]{xy}

\usepackage{bm}

\usepackage{indentfirst}

\newcommand{\pullbackcorner}[1][dr]{\save*!/#1-1.7pc/#1:(-1,1)@^{|-}\restore}

\usepackage{tocloft}

\title{\bf Buchstaber, Ochanine,\\ Krichever, and Witten Genera}

\date{}

\author{Mikhail Kornev}

\begin{document}

\maketitle

\begin{abstract}
We introduce a new class of one-dimensional commutative formal group
laws with formal inverse $\bar u=-u$ whose modulus square construction
yields Buchstaber's family of polynomials, and prove that the formal
group law $F_{\mathrm{Bc}}(u,v)$ is universal for this class over
commutative $\mathbb{Z}[1/2]$-algebras. This class is related to, but
does not coincide with, the family of formal group laws associated
with the Krichever genus. We compute the values of the corresponding
Hirzebruch genus $\mathrm{Bc}$ on theta divisors and complex projective
spaces, describe its relation to the Ochanine, Krichever, and Witten
genera, and show how this construction gives examples not arising
from Hirzebruch's elliptic genera of level $n$. We construct a
complex-oriented multiplicative cohomology theory $\mathrm{Buc}^*$
such that the complex orientation $\mathrm{U}^*\to\mathrm{Buc}^*$ induces the
genus $\mathrm{Bc}$ on coefficient rings, and prove that its
localization at the discriminant is naturally isomorphic to the
Landweber-exact elliptic cohomology theory
$\mathrm{U}^*(-)\otimes_{\Omega_{\mathrm{U}}}\mathbb{Z}[1/2,a_1,a_2,a_3,\Delta^{-1}]$.
Finally, we prove that $\mathbb{Z}[a_1,12a_2,360a_3]$ is the smallest
subring of $\mathbb{Q}[a_1,a_2,a_3]$ over which the Buchstaber
exponential $f_{\mathrm{Bc}}(u)$ is a Hurwitz series. As a corollary,
we prove the Hurwitz-integrality statement predicted by Bunkova's
coefficient-divisibility conjecture and show that
$\mathbb{Z}[g_2/2,6g_3]$ is the minimal Hurwitz coefficient ring
of the Weierstrass sigma-function.
\end{abstract}

\date{}

\begin{center}
  \pgfornament[width=3cm]{87}
\end{center}

\tableofcontents

\thispagestyle{empty}

\newpage

\section{Introduction}

In \cite{Buchstaber90}, a family of polynomials
\begin{equation}\label{B_a_polynomial}
\Bb_{\boldsymbol{a}}(z; x, y) = (x + y + z - a_2xyz)^2 - 4(1 + a_3xyz)(xy + yz + xz + a_1xyz),
\end{equation}
where $\bm{a} = (a_1, a_2, a_3)$, was introduced\footnote{In \cite{Buchstaber90}, the polynomial \eqref{B_a_polynomial} is written using a different normalization $\widetilde{a}_1$, $\widetilde{a}_2$, and $\widetilde{a}_3$ of the free parameters $a_1$, $a_2$, and $a_3$. More precisely,
\[
\widetilde{a}_1=a_1,\qquad
\widetilde{a}_2=a_2,\qquad
\widetilde{a}_3=4a_3.
\]
This difference in normalization is immaterial, however, since the arguments are carried out over $\Z\left[\frac12\right]$-algebras.}. It was shown that each polynomial defines a two-valued formal group $\G_{\bm{a}}$, universal in the class of two-valued formal groups obtained by the modulus square construction, see \cite{Buchstaber_Novikov}, from the class of formal groups
\begin{equation}\label{the_class_F}
\Ff(u, v) = \frac{u^2\lambda_1(v)-v^2\lambda_1(u)}{u\lambda_2(v)-v\lambda_2(u)},
\end{equation}
where $\lambda_k(u)\in A[[u]]$, $k=1,2$; the graded ring $A$ has a rather complicated structure, see \cite{Buchstaber_Ustinov15}, but
\[
A\otimes \mathbb Q \cong \mathbb Q[b_1,b_2,b_3,b_4],
\qquad
b_1 = \alpha,\quad b_2 = \wp(z),\quad b_3 = \wp'(z),\quad b_4 = g_2,
\]
with degrees $\deg b_i = -2i$, $i=1,\ldots,4$, and $\wp(z) = \wp(z; g_2, g_3)$ is the Weierstrass function satisfying the equation
\[
(\wp'(z))^2 = 4\wp(z)^3 - g_2\wp(z)-g_3
\]
($z$ is a parameter of the uniformization of an elliptic curve, and $\alpha$ is a point on the curve).

It was shown in \cite[Theorem 6.3]{Buchstaber_Veselov19}, \cite[Theorem 8]{BK} that for free complex parameters $\bm{a}=(a_1,a_2,a_3)$ the Buchstaber polynomial \eqref{B_a_polynomial} defines the structure of the universal symmetric $2$-algebraic $2$-valued group $\G_{\Cc}(\Bb_{\bm{a}})$ on $\Cc$ with addition
\[
x\ast y=\{z\mid \Bb_{\bm{a}}(z;x,y)=0\},
\]
zero $0$, and inverse $\inv(x)=x$.

The class of groups \eqref{the_class_F} contains the formal groups corresponding to the Todd genus, the signature, the Ochanine (also elliptic) genus \cite{Ochanine87}, and the Krichever genus. In \cite{Buchstaber90}, it was proved that the class $\Ff(u,v)$ is a realization of the class of Krichever formal groups, see \cite{Krichever90}, with exponential
\begin{equation}\label{f_Krichever}
f_{\Kr}(x)=\frac{e^{\alpha x}}{\Phi(x, z)},
\qquad
\text{where}\qquad
\Phi(x, z) = \Phi(x, z; g_2, g_3) = \frac{\sigma(z-x)}{\sigma(z)\sigma(x)}e^{\zeta(z)x},
\end{equation}
is the Baker--Akhiezer function, and $\sigma(z) = \sigma(z; g_2, g_3)$, $\zeta(z) = \frac{d}{dz}\log\sigma(z)$ are Weierstrass functions, see, for example, \cite{Lang87}, \cite{WW96}.

In the present paper, a new class of formal groups
\begin{equation}\label{Buchstaber_formal_group}
F_{\Bc}(u,v)= \left[\left(\frac{u\sqrt{Q(v)}-v\sqrt{Q(u)}}{u^2-v^2}\right)^2+a_3u^2v^2\right]^{-1/2},
\end{equation}
where $Q(t) = 1-a_1t^2+a_2t^4-a_3t^6$ is introduced ($a_1$, $a_2$, and $a_3$ are free parameters), whose modulus square is given by the family of polynomials \eqref{B_a_polynomial}. We prove that, over commutative \(\mathbb Z[1/2]\)-algebras, the Buchstaber formal group law is universal among formal group laws whose modulus square construction yields the corresponding Buchstaber two-valued formal group $\mathbb G_{\boldsymbol{a}}$. Since $F_{\Bc}(u,v)$ is a formal group law over the graded ring $$R_{\Bc} := \Z\left[\frac12, a_1, a_2, a_3 \right],\quad\deg a_i=-4i,\qquad i=1,2,3,$$ the
universality of the formal group law of complex cobordism determines
a unique Hirzebruch genus
\[
\Bc\colon \Omega_{\Uu}\longrightarrow R_{\Bc},
\]
which we call the \emph{Buchstaber genus}. Here
$\Omega_{\Uu}$ denotes the complex cobordism ring. 

The formal group law $F_{\Bc}(u,v)$ belongs to the family of formal group laws \eqref{the_class_F} if and only if $a_3=0$. In the case of $a_3=0$, we get the Ochanine genus. Therefore, for $a_3\neq 0$, the corresponding genus is distinct from the elliptic genera of level $n$ introduced by Hirzebruch, see \cite[Appendix III, Section 1]{Hirzebruch88}.

At the cohomological level, we construct a complex-oriented multiplicative theory whose coefficient homomorphism is the Buchstaber genus. More precisely, we construct a complex-oriented multiplicative cohomology theory \(\mathrm{Buc}^*\) such that the orientation homomorphism\footnote{Here and below, $\Uu^{\ast}$ and $\Sp^{\ast}$ denote the
cohomology theories represented by the spectra $\MU$ and $\MSp$,
respectively.} $\Uu^*\to\mathrm{Buc}^*$ induces the Buchstaber genus on coefficient rings, and prove that its discriminant localization is naturally isomorphic to the Landweber-exact elliptic theory $\Uu^*(-)\otimes_{\Omega_{\Uu}}\mathbb Z\!\left[\frac12,a_1,a_2,a_3,\Delta^{-1}\right]$, where 
\begin{equation*}
\begin{aligned}
\Delta
&=\disc_t\bigl(t^3-a_1t^2+a_2t-a_3\bigr)\\
&=a_1^2a_2^2-4a_1^3a_3-4a_2^3+18a_1a_2a_3-27a_3^2.
\end{aligned}
\end{equation*} The construction uses oriented bordism with singularities after inverting $2$, in the spirit of the Baas–Sullivan theory  \cite{Baas73, LRS95} and Mironov's results \cite{Mironov79}.

In 1882, Weierstrass obtained a recursive expansion of the elliptic sigma-function and proved that its normalized coefficients are integers \cite{Weierstrass1882}. Equivalently, in modern Hurwitz-series notation (see \eqref{Hurwitz_series_definition} for the definition),
\[
\sigma(u;g_2,g_3)\in\mathcal H_{\Z[g_2/2,\,2g_3]}[[u]].
\]
Buchstaber and Leikin later recovered this recurrence from their heat-equation formalism and extended the method to sigma-functions of higher-genus algebraic curves \cite{Buchstaber_Leikin04,Buchstaber_Leikin05}. In \cite{Bunkova17}, Bunkova formulated a coefficient-divisibility conjecture giving exact $2$-adic and $3$-adic valuation formulas for the normalized coefficients of the sigma-function. In particular, this conjecture predicts that
\[
\sigma(u;g_2,g_3)\in\mathcal H_{\Z[g_2/2,\,6g_3]}[[u]].
\]
We do not prove the stronger valuation formulas here. Instead, we prove that the full three-parameter Buchstaber exponential satisfies
\[
f_{\Bc}(u)\in\mathcal H_{\Z[a_1,12a_2,360a_3]}[[u]]
\]
and that $\Z[a_1,12a_2,360a_3]$ is its minimal Hurwitz coefficient ring. Using the specialization $a_1=0$, we establish the Hurwitz-integrality consequence of Bunkova's conjecture for the Weierstrass sigma-function and determine its minimal Hurwitz coefficient ring.

The paper is organized as follows. 

In Section \ref{modulus_square_construction}, we recall the basics of the theory of two-valued formal groups. Section \ref{The_Buchstaber_Genus} is devoted to Theorem \ref{Buchstaber_genus_theorem}. Corollary \ref{coroll_Buchstaber_genus} says that the formal group \eqref{Buchstaber_formal_group} defines a Hirzebruch genus $\Bc$ called the Buchstaber genus, while Corollary~\ref{coroll_integral_universality} proves that, over commutative $\Z[1/2]$-algebras, every one-dimensional commutative formal group law with formal inverse $\bar u=-u$
whose modulus square belongs to the Buchstaber family is obtained from $F_{\Bc}$ by a unique specialization of the parameters. As applications, we compute its values $\Bc(\Theta_n)$ on the smooth theta divisors $\Theta_n$ and complex projective spaces. It turns out that the values $\Bc(\Theta_n)$ belong to the ring $\Z[a_1,a_2,a_3]$, see Proposition \ref{Bc_of_Theta_is_in_Z}. In Proposition \ref{element_K}, we compare the Buchstaber, Ochanine, and Krichever genera. In Section \ref{Buchstaber_cohomology_theory}, we construct the Buchstaber cohomology theory and identify its localization at the discriminant; see Theorems~\ref{Buc_spectrum} and~\ref{Buchstaber_spectrum_is_Landweber_exact}. Section \ref{Bunkova_conjecture} compares the Buchstaber and Witten exponentials, determines the minimal Hurwitz coefficient ring of $f_{\Bc}$ (see Theorem~\ref{Buchstaber_exponential_Hurwitz_theorem}), and derives the Hurwitz-integrality consequence of Bunkova's conjecture (see Corollary~\ref{Bunkova_Hurwitz_prediction}).

The present paper is a substantially expanded version of \cite{Kornev26}, incorporating the $\Z[1/2]$-universality theorem for the Buchstaber formal group law, the construction of the complex-oriented multiplicative cohomology theory, and a proof of the Hurwitz-integrality consequence of Bunkova's conjecture.

The author is grateful to Victor M. Buchstaber and Sergei K. Lando for useful discussions of this work.

\section{The Modulus Square Construction}\label{modulus_square_construction}

In \cite{Buchstaber_Novikov}, a modulus square construction and the corresponding two-valued formal group in the complex cobordism ring $\Omega_{\Uu}$ were introduced. The general theory of two-valued formal groups was developed in \cite{Buchstaber75}. For the definition, examples, and properties of $n$-valued groups, see the survey \cite{Buchstaber}, or recent works \cite{BK}. Let us recall the modulus square construction. 

Let $\zeta_1 = \pr_1^{\ast}(\zeta)$ and $\zeta_2 = \pr_2^{\ast}(\zeta)$ be quaternionic line bundles, where $\zeta$ is the universal quaternionic line bundle over $\HP^{\infty}$, and $\pr_j:\HP^{\infty}\times\HP^{\infty}\to \HP^{\infty}$ are two projections onto the Cartesian factors. The embedding $\Sss^1\rightarrow\Sp(1)$ of the multiplicative circle group $\Sss^1$ into the group of unit quaternions $\Sp(1)$ induces a map
\begin{equation}\label{iota_map}
\iota:\Cc\!P^{\infty}\to\HP^{\infty}
\end{equation}
between their classifying spaces. The pullback of the bundle $\zeta_j$ ($j=1,2$) under the map $\iota$ decomposes as a sum $\eta_j\oplus\overline{\eta}_j$, where $\eta_j$ is the universal complex line bundle. We have the following pullback diagram:
$$
\xymatrix{
(\eta_1\oplus\overline{\eta}_1)\otimes_{\Cc}(\eta_2\oplus\overline{\eta}_2)\pullbackcorner\ar[rr]\ar[d] & & \zeta_1\otimes_{\Cc}\zeta_2\ar[d]\\
\CP^{\infty}\times\CP^{\infty}\ar[rr]^{\iota\times \iota} & & \HP^{\infty}\times\HP^{\infty}
}
$$

There is an isomorphism of complex vector bundles: $$(\eta_1\oplus\overline{\eta}_1)\otimes_{\Cc}(\eta_2\oplus\overline{\eta}_2) \cong (\eta_1\eta_2\oplus\overline{\eta}_1\overline{\eta}_2)\oplus(\eta_1\overline{\eta}_2\oplus\overline{\eta}_1\eta_2).$$ Note that the bundles $\xi_1:= \eta_1\eta_2\oplus\overline{\eta}_1\overline{\eta}_2$ and $\xi_2 :=\eta_1\overline{\eta}_2\oplus\overline{\eta}_1\eta_2$ admit quaternionic structures. 

Let $\Pic(M)$ be the Picard group of isomorphism classes $[\xi]$ of complex linear bundles over a complex manifold $M$. Consider an involution $\sigma: [\xi]\mapsto[\overline{\xi}]$. By abuse of notation, we will write $\xi$ instead of $[\xi]$. The points of the corresponding orbit space $X:=\Pic(M)/\sigma$ are identified with unordered pairs $[\xi, \overline{\xi}]$. We get the 2-valued coset group (see \cite[Section 6]{Buchstaber} for the definition) with multiplication $$[\xi,\overline{\xi}]\ast[\eta,\overline{\eta}] = [[\xi\eta,\overline{\xi}\overline{\eta}], [\xi\overline{\eta}, \overline{\xi}\eta]]$$ and neutral element $[\mathbf{1}_{\Cc}, \mathbf{1}_{\Cc}]$, where $\mathbf{1}_{\Cc}$ denotes a class of a trivial complex line bundle.

Note that $\zeta_1\otimes_{\Cc}\zeta_2$ cannot admit a quaternionic structure (an exercise in representation theory). But the bundle $\zeta_1\otimes_{\Cc}\zeta_2\otimes_{\Cc}\zeta_3$ over $\HP^{\infty}\times\HP^{\infty}\times\HP^{\infty}$ does admit one. A natural isomorphism $$(\zeta_1\otimes_{\Cc}\zeta_2)\otimes_{\Cc}\zeta_3\cong \zeta_1\otimes_{\Cc}(\zeta_2\otimes_{\Cc}\zeta_3)$$ is equivalent to the associativity of the two-valued operation $\ast$. 

This two-valued group is a geometric realization of a more general construction: 

\begin{prop}\label{modulus_square_prop}
Let $G$ be an abelian group and $\sigma: g\mapsto -g$ an involution. Denote by $X$ the orbit space $G/\sigma$ with points $[g, -g]$. Then $X$ carries an involutive commutative two-valued coset group structure with operation
$$
[g, -g]\ast[h,-h] = [[g+h, -g-h], [g-h, -g+h]]
$$ with neutral element $[e, e]$, and $e$ is a unit.
\end{prop}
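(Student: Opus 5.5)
We prove the statement by checking directly the axioms of an involutive commutative two-valued group, in the sense of \cite[Section 6]{Buchstaber}, for the coset construction $X = G/\sigma$. Let $\pi\colon G\to X$, $g\mapsto[g,-g]$, be the quotient map. We regard a two-valued product as an unordered pair, i.e.\ an element of $\mathrm{Sym}^2X$.

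First we show that the operation is well defined. The pair $\{\pi(g+h),\pi(g-h)\}$ must not depend on the choice of representatives $g$ and $h$. Replacing $h$ by $-h$ swaps the two entries, since $\pi(g-h)$ and $\pi(g+h)$ trade places. Replacing $g$ by $-g$ sends the pair to $\{\pi(-g+h),\pi(-g-h)\}$, and since $\pi(x)=\pi(-x)$ this equals $\{\pi(g-h),\pi(g+h)\}$. So the unordered pair is well defined. Commutativity follows in the same way, because $\pi(g-h)=\pi(h-g)$.

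Next come the unit, the inverse, and associativity.

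For the unit, take $e=0$. Then $[g,-g]\ast[e,e]=\{\pi(g),\pi(g)\}=2\cdot[g,-g]$, which is exactly the unit axiom for a two-valued group.

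For the inverse, set $\inv(x)=x$. Then $x\ast x=\{\pi(2g),\pi(0)\}$ contains $e$, so the group is involutive.

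For associativity we extend $\ast$ to multisets by linearity. Both triple products should equal the four-element multiset
\[
\{\pi(g+h+k),\ \pi(g+h-k),\ \pi(g-h+k),\ \pi(g-h-k)\}.
\]
For $(x\ast y)\ast z$, expand $\pi(g\pm h)\ast\pi(k)$ to get $\pi(g\pm h\pm k)$ with all four sign choices. For $x\ast(y\ast z)$, expand $\pi(g)\ast\pi(h\pm k)$ to get $\pi(g\pm(h\pm k))$, which gives the same four classes. Here we use only that $G$ is abelian and that $\pi$ is invariant under $-1$.

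The coset-group structure is best seen as a consequence of the general coset construction: $X$ is the orbit space of $G$ under the finite group $\Z/2=\{1,\sigma\}$ acting by automorphisms. The general recipe is $\pi(g)\ast\pi(h)=\{\pi(g+\tau h)\}_{\tau\in\Z/2}$, which is precisely the formula in the statement. With that identification, the result follows immediately from \cite[Section 6]{Buchstaber}.

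The main subtlety is bookkeeping in the degenerate cases. When $2g=0$ or $2h=0$, the two values coincide, and when $g=-g$, the point $[g,-g]$ is a fixed point. The argument handles these uniformly because multiplicities are tracked throughout using multisets in $\mathrm{Sym}^2X$ and $\mathrm{Sym}^4X$.
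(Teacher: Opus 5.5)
Your proposal is correct, and its closing paragraph is exactly the paper's proof. The paper simply invokes the coset construction of \cite[Section 6]{Buchstaber} for the order-two group $\{1,\sigma\}$ acting on $G$ by $g\mapsto -g$, and your explicit checks of well-definedness, the unit axiom $x\ast e=[x,x]$, the inverse $\inv(x)=x$, and equality of the four-element multisets for associativity unpack that citation and make the argument self-contained.
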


\begin{proof}
 This follows directly from the coset construction, see \cite[Section 6]{Buchstaber}.
\end{proof}

\begin{defi}
The two-valued group considered in Proposition \ref{modulus_square_prop} is called the {\it modulus square construction} for an abelian group $G$. 
\end{defi}

The modulus square construction has the following infinitesimal analogue. Let $F(u, v)$ be a commutative formal group over an algebra $A$ with logarithm $g(u)$ and exponential $f(u)$ (they are defined over $A\otimes \Q$). Let $x = u\overline{u}$, $y = v\overline{v}$. Then, by definition, a {\it two-valued formal group} $\G_F$ is given by the law \begin{equation}\label{two-valued_law}x\ast y = [F(u,v) F(\overline{u}, \overline{v}), F(u, \overline{v})F(\overline{u}, v)],\end{equation} neutral element $x = 0$ and inverse $\inv(x) = x$.

Define the following formal series: \begin{equation}\label{Theta_1_and_Theta_2_z_1_and_z_2}\begin{matrix}z_1 = F(u, v)F(\overline{u}, \overline{v}), & & z_2 = F(u,\overline{v})F(\overline{u}, v)\\ \Psi_1 = z_1 + z_2, & & \Psi_2 = z_1z_2. \end{matrix}\end{equation} One can check that these series belong to the ring $A[[x, y]]$, i.e. $\Psi_1 = \Psi_1(x, y)$ and $\Psi_2 = \Psi_2(x, y)$, see \cite[Lemma 2.21]{Buchstaber_Novikov}. Hence, we get the following

\begin{prop}\label{two_valued_formal_group_roots}
The two-valued formal group $\G_F$ is determined by the roots $z_1,z_2$ of a quadratic polynomial over the ring $A[[x,y]]$ (see \eqref{Theta_1_and_Theta_2_z_1_and_z_2} for the notation):
\begin{equation}\label{Theta_1_and_Theta_2_general}
\begin{aligned}
z^2-\Psi_1(x,y)z+\Psi_2(x,y)&=0,\\
x\ast y&=[z_1,z_2].
\end{aligned}
\end{equation}
\end{prop}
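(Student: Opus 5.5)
The statement to prove is that $z_1,z_2$ are the two roots of $z^2-\Psi_1z+\Psi_2$, and that the coefficients $\Psi_1,\Psi_2$ are power series in $x=u\bar u$, $y=v\bar v$ over $A$.

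The root part is formal. In $A[[u,v]]$ we have the identity
\[
(z-z_1)(z-z_2)=z^2-(z_1+z_2)z+z_1z_2=z^2-\Psi_1z+\Psi_2
\]
by the definitions \eqref{Theta_1_and_Theta_2_z_1_and_z_2}. Hence $z_1$ and $z_2$ are exactly its roots, and by \eqref{two-valued_law} the multiset $x\ast y$ equals $[z_1,z_2]$. The content of the proposition is therefore that this quadratic has coefficients in $A[[x,y]]\subset A[[u,v]]$, which is the claim $\Psi_j=\Psi_j(x,y)$.

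For that step I would argue directly with the formal inverse $\bar u=i(u)$, so that $F(u,\bar u)=0$.

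1. Consider the two involutions $\tau_1\colon u\mapsto\bar u$ and $\tau_2\colon v\mapsto\bar v$ of $A[[u,v]]$. They commute, because each acts on a separate variable.
2. Since $\bar{\bar u}=u$, we get $\tau_1(z_1)=F(\bar u,v)F(u,\bar v)=z_2$, and likewise $\tau_2(z_1)=z_2$.
3. It follows that $\Psi_1$ and $\Psi_2$ are invariant under the group $(\Z/2)^2$ generated by $\tau_1,\tau_2$.
4. It remains to show that the $\tau_1$-invariants of $A[[u]]$ are exactly $A[[u\bar u]]$, and similarly in two variables.

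For step 4, note that $\bar u=-u+\dots$, so $x=u\bar u=-u^2+\dots$. Change variables to $w=g(u)$ over $A\otimes\Q$, in which the involution becomes $w\mapsto -w$. The invariants are then even series in $w$, and $x$ is an even series with leading term $-w^2$. This identifies invariants with $(A\otimes\Q)[[x]]$.

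To avoid rational coefficients, I would instead argue integrally over $A$ by a Weierstrass-division style induction. Given an invariant $h$, its constant term is in $A$. Subtracting it, $h$ is divisible by $u$, and invariance gives divisibility by $\bar u$. Since $u$ and $\bar u/u$ are coprime, with $\bar u/u$ a unit up to sign, $h$ is divisible by $u\bar u=x$, and the quotient is again invariant. Iterating, with $u$-adic convergence, expresses $h$ as a series in $x$ with coefficients in $A$. The same argument applied in each variable, using that $\tau_1$ and $\tau_2$ commute, handles two variables. This is \cite[Lemma 2.21]{Buchstaber_Novikov}, which the paper cites, so one may simply invoke it.

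The main obstacle is this integral invariant-theory step, specifically justifying $\tau_1$-invariant divisibility by $x$ without inverting $2$. When $\bar u=-u$, as in the paper's setting, the step is immediate: invariants are even series and $x=-u^2$.
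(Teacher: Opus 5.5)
Your reduction matches the paper's. The paper gives no argument beyond citing \cite[Lemma 2.21]{Buchstaber_Novikov} for $\Psi_1,\Psi_2\in A[[x,y]]$; after that the proposition is the Vieta identity of your first paragraph. So invoking that lemma, as you suggest at the end, is exactly the paper's proof. Your steps 1--3 (the commuting involutions with $\tau_i(z_1)=z_2$) are correct, and so is the rational form of step 4.

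The integral form of step 4, however, is not valid as written. Since $\bar u/u=-1+O(u)$ is a unit of $A[[u]]$, the series $u$ and $\bar u$ are associates. Divisibility by both is therefore just divisibility by $u$ and says nothing about $u^2$; for instance, $u$ itself is divisible by $\bar u$ but not by $x$. The real content of invariance sits in the linear coefficient. Write $h=h_0+u\,k(u)$; then $h(\bar u)=h(u)$ becomes $k(u)=(\bar u/u)\,k(\bar u)$, and setting $u=0$ gives $2k(0)=0$.

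So what you need is not that $2$ be inverted but that $A$ have no $2$-torsion, which already holds over $\Z$. Under that hypothesis $k(0)=0$, so $u^2$ divides $h-h_0$, and hence so does $x=-u^2\epsilon$ with $\epsilon$ a unit. The quotient is again invariant because $x$ is an invariant non-zero-divisor, and your iteration converges. The same hypothesis is hidden in your remark that for $\bar u=-u$ the invariants are the even series.

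Without this hypothesis the invariant-theoretic claim is false: over $A=\Z/2$ with $F(u,v)=u+v$, the involution $\tau_1$ is the identity, so every series is invariant. For arbitrary $A$, run the argument for the universal law over the torsion-free ring $\Omega_{\Uu}$. Then push $\Psi_1,\Psi_2$ forward along the classifying map $\Omega_{\Uu}\to A$; the resulting series are unique because $A[[x,y]]\to A[[u,v]]$ is injective.

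Finally, your phrase ``the same argument applied in each variable'' is best made precise as follows. First apply the one-variable statement to $\tau_2$ over the coefficient ring $A[[u]]$, giving $\Psi_j\in A[[u]][[y]]$. Then apply it to $\tau_1$ coefficientwise in $y$, since $\tau_1$ fixes $y$.
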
 

A general definition of a multi-valued formal group was given in \cite[Section 1]{Buchstaber75}.

\begin{defi}
In the notation of \eqref{Theta_1_and_Theta_2_z_1_and_z_2}, the series \begin{equation}\label{logarithm_of_two-valued_group_2}B(x) = g(u)g(\overline{u}) = -g(u)^2\end{equation} is called a logarithm of the two-valued formal group $\G_F$ with the multiplication \eqref{two-valued_law}.
\end{defi}

The series $B(x)$ in \cite{Buchstaber_Novikov} was called the logarithm for the following reason: 

\begin{prop}\label{z_1_2}
In the notations of \eqref{Theta_1_and_Theta_2_z_1_and_z_2}$\ and\ \eqref{logarithm_of_two-valued_group_2}$$:$
$$
z_{1,2} = B^{-1}\left(\left(\sqrt{B\left(x\right)} \pm \sqrt{B\left(y\right)}\right)^2\right).
$$
\end{prop}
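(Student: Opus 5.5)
Everything reduces to the defining property of the logarithm, $g(F(u,v))=g(u)+g(v)$, which holds over $A\otimes\Q$. We also need the formal inverse: in this setting the two-valued formal group is built with $\bar u$ the formal inverse of $u$, so $g(\bar u)=-g(u)$. Consequently
\[
B(x)=g(u)g(\bar u)=-g(u)^2,
\]
as recorded in \eqref{logarithm_of_two-valued_group_2}. Here $x=u\bar u=-u^2+\dots$, so $x$ is a formal coordinate vanishing at $0$.

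The first step is to check that $B$ is a genuine invertible series in $x$ and that $\sqrt{B(x)}$ makes sense. Since $g(u)=u+\dots$, we have $B=-u^2+\dots$. As $x=-u^2+\dots$, this gives $B(x)=x+O(x^2)$ as a series in $x$. That it is a series in $x$ at all follows from its invariance under $u\mapsto\bar u$, exactly as for $\Psi_1,\Psi_2$ in \cite[Lemma 2.21]{Buchstaber_Novikov}. Hence $B^{-1}$ exists over $A\otimes\Q$. We interpret $\sqrt{B(x)}$ as $\pm i\,g(u)$, or more invariantly we note that only even expressions in the square roots appear in the final formula, so the sign ambiguity is harmless. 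Concretely, choose $\sqrt{B(x)}=\sqrt{-1}\,g(u)$ and $\sqrt{B(y)}=\sqrt{-1}\,g(v)$, working over $A\otimes\Q[\sqrt{-1}]$ if necessary.

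Next we compute $B(z_1)$. Since $z_1=F(u,v)F(\bar u,\bar v)$, and $F(\bar u,\bar v)$ is the formal inverse of $w:=F(u,v)$ (by commutativity and associativity of $F$), $z_1$ has the form $w\bar w$. Therefore
\[
B(z_1)=-g(w)^2=-(g(u)+g(v))^2=\bigl(\sqrt{B(x)}+\sqrt{B(y)}\bigr)^2.
\]
Similarly, $z_2=F(u,\bar v)F(\bar u,v)=w'\bar w'$ with $w'=F(u,\bar v)$, so
\[
B(z_2)=-(g(u)-g(v))^2=\bigl(\sqrt{B(x)}-\sqrt{B(y)}\bigr)^2.
\]
Applying $B^{-1}$ yields the claim. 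Changing the signs of the chosen square roots only swaps $z_1$ and $z_2$, consistent with the unordered pair $[z_1,z_2]$.

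The main obstacle is the justification that $B$, a priori a series in $u$, is a well-defined invertible series in $x$ (and likewise in $z_1,z_2$). I would handle it by the symmetric-function argument: any series in $u$ invariant under $u\mapsto\bar u$ is a series in $u+\bar u$ and $u\bar u$. In turn, $u+\bar u$ is itself expressible through $u\bar u$ as in \cite{Buchstaber_Novikov}; for $\bar u=-u$ this is immediate, since $u+\bar u=0$ and $x=-u^2$. Once this is in place, the remaining steps are formal.
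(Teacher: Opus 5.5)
Your proof is correct and follows the paper's argument. Both reduce the claim to $B(z_{1,2})=-(g(u)\pm g(v))^2$ by writing $z_1=w\bar w$ with $w=F(u,v)=g^{-1}(g(u)+g(v))$ (the paper's $V$) and using $B(w\bar w)=-g(w)^2$. Your extra checks are details the paper leaves implicit: that $F(\bar u,\bar v)$ is the formal inverse of $F(u,v)$, that $B$ is an invertible series in $x$ over $A\otimes\Q$, and the branch convention $\sqrt{B(x)}=\sqrt{-1}\,g(u)$.
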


\begin{proof}
We have
$$
B^{-1}((\sqrt{B(x)} \pm \sqrt{B(y)})^2) = B^{-1}(-(g(u)\pm g(v))^2).
$$
It is enough to check that
$$
B(z_{1,2}) = -(g(u)\pm g(v))^2.
$$
Consider the case of $z_1$ (the case of $z_2$ is completely analogous). We have
$$
B(z_1) = B(V\overline{V}) = -g(V)^2 = -(gg^{-1}(g(u) + g(v)))^2 = -(g(u) + g(v))^2,
$$
where $V = g^{-1}(g(u) + g(v))$. This is exactly what was required.
\end{proof}

Recall the topological interpretation of the two-valued formal group
$\G_{F_{\Uu}}$ developed in \cite{Buchstaber_Novikov}. Let
\[
\rho\colon \Sp^*(-)\longrightarrow \Uu^*(-)
\]
be the natural transformation obtained by forgetting the symplectic
structure, and put
\[
j:=\iota\times\iota\colon
\CP^\infty\times\CP^\infty
\longrightarrow
\HP^\infty\times\HP^\infty.
\]

Let $\eta_i=\pr_i^*(\eta)$, where $\eta$ is the universal complex line
bundle over $\CP^\infty$, and put
\[
u=c_1^{\Uu}(\eta_1),\qquad
v=c_1^{\Uu}(\eta_2),\qquad
\bar u=c_1^{\Uu}(\bar\eta_1),\qquad
\bar v=c_1^{\Uu}(\bar\eta_2).
\]
As we already know, the pullback of $\zeta_1\otimes_{\Cc}\zeta_2$ decomposes as
\[
j^*(\zeta_1\otimes_{\Cc}\zeta_2)
\cong
\xi_1\oplus\xi_2,
\]
where both $\xi_1$ and $\xi_2$ admit quaternionic structures.

Introduce the classes
\[
\begin{aligned}
z_1
&:=c_2^{\Uu}(\xi_1)
 =F_{\Uu}(u,v)F_{\Uu}(\bar u,\bar v),\\
z_2
&:=c_2^{\Uu}(\xi_2)
 =F_{\Uu}(u,\bar v)F_{\Uu}(\bar u,v).
\end{aligned}
\]
Furthermore, put
\[
\begin{aligned}
x
&:=\rho\!\left(\, j^*p_1^{\Sp}(\zeta_1)\right)
  =u\bar u,\\
y
&:=\rho\!\left(\, j^*p_1^{\Sp}(\zeta_2)\right)
  =v\bar v.
\end{aligned}
\]
Thus
\[
x,y\in
\Uu^4(\CP^\infty\times\CP^\infty).
\]

The two symmetric functions of the values $z_1,z_2$ are
\[
\begin{aligned}
\Psi_1(x,y)
&:=
\rho\!\left(
p_1^{\Sp}\bigl(j^*(\zeta_1\otimes_{\Cc}\zeta_2)\bigr)
\right)
=z_1+z_2,\\
\Psi_2(x,y)
&:=
\rho\!\left(
p_2^{\Sp}\bigl(j^*(\zeta_1\otimes_{\Cc}\zeta_2)\bigr)
\right)
=z_1z_2.
\end{aligned}
\]
Consequently, $z_1$ and $z_2$ are the roots of
\[
z^2-\Psi_1(x,y)z+\Psi_2(x,y)=0,
\]
and hence determine the two-valued formal group
$\G_{F_{\Uu}}$.

Now let
\[
X:=\rho\!\left(p_1^{\Sp}(\zeta)\right)
   =c_2^{\Uu}(\zeta_{\Cc})
   \in\Uu^4(\HP^\infty),
\]
where $\zeta_{\Cc}$ denotes the underlying complex bundle of the
universal quaternionic line bundle $\zeta$. Then
\[
\iota^*X
=
c_1^{\Uu}(\eta)c_1^{\Uu}(\bar\eta)
=
u\bar u
=
x.
\]

Let $z\in H^4(\HP^\infty;\Z)$ be the generator chosen so that
\[
\iota^*z=-t^2,
\]
where $t\in H^2(\CP^\infty;\Z)$ is the standard generator. By
Buchstaber and Novikov \cite[page~93]{Buchstaber_Novikov},
\[
\ch_{\Uu}(g_{\Uu}(u))=t,
\qquad
\ch_{\Uu}(B(X))=z.
\]
Equivalently,
\[
\ch_{\Uu}(X)=B^{-1}(z).
\]
After applying $\iota^*$, this identity becomes
\[
\ch_{\Uu}(B(x))
=
\iota^*z
=
-t^2,
\]
in agreement with
\[
B(x)=B(u\bar u)=-g_{\Uu}(u)^2.
\]

\vspace{1.5em}

The notions of logarithm $B(x)$ and exponential $B^{-1}(x)$ can be generalized to the general case of 2-valued formal groups. All the 2-valued formal groups of the form \eqref{Theta_1_and_Theta_2_general} were classified by Buchstaber in \cite{Buchstaber75}. This classification shows that every commutative one-dimensional two-valued formal group $\mathbb G(R)$ over a commutative $\mathbb{Q}$-algebra $R$ that is an integral domain is strongly isomorphic either to the elementary two-valued formal group of the first type,
\[
z^{2}-2(x+y)z+(x-y)^{2}=0,
\]or to the two-valued formal group of the second type,
\[
(z-x-y)^{2}=0,
\]which is the square of the additive formal group. The first case is the one relevant below. For a two-valued formal group \(\mathbb G(R)\) of the first type, its {\it logarithm} is the unique normalized series \(B(x)=x+O(x^{2})\) that establishes this strong isomorphism. Equivalently, if \(z_{1}\) and \(z_{2}\) are the two values of \(x*y\), then
\[
B(z_{1,2})
=
\left(\sqrt{B(x)}\pm\sqrt{B(y)}\right)^{2}.
\]Thus \(B\) linearizes the two-valued law in the same way that the ordinary logarithm linearizes a one-valued formal group law. When $\mathbb G(R)$ is obtained by the modulus square construction from a formal group law with logarithm \(g(u)\) and formal inverse \(\bar u=-u\), this canonical series is precisely
\[
B(x)=g(u)g(\bar u)=-g(u)^{2},
\qquad
x=u\bar u=-u^{2}.
\]This connects the earlier definition of \(B(x)\) with the strong-isomorphism formula already used in Proposition \ref{z_1_2}.

 \begin{theorem}[{\cite[Theorem 6.4, part (1)]{Buchstaber75}}]\label{two_valued_group_differential_equation}
Let
\begin{equation}\label{general_two-valued_law}
x\ast y = \{z\mid z^2 - \Psi_1(x, y)z + \Psi_2(x, y) = 0\}
\end{equation}
be an arbitrary commutative two-valued formal group $\G(R)$ in formal power series over an arbitrary commutative $\mathbb Q$-algebra $R$ $($not necessarily an integral domain$)$. Define $B(x)$ to be series satisfying the differential equation
$$
\frac{1}{2}\phi_1(x)B'(x) + \frac{1}{8}\phi_2(x)B''(x) = 1
$$
with the initial condition $B(0)=0$, where
$$
\begin{aligned}
\phi_1(x) &= \left.\frac{\partial\Psi_1(x,y)}{\partial y}\right|_{y=0},\quad
\phi_2(x) = \left.\frac{\partial\sigma(x,y)}{\partial y}\right|_{y=0},\quad
\sigma(x,y) = \Psi_1^2 - 4\Psi_2.
\end{aligned}
$$
If $\G(R)$ is of the first type, that is, $\Psi_2(x,y)\equiv (x-y)^2 \mod \deg 3,$ then the series $B(x)$ establishes a strong isomorphism of this two-valued formal group with the elementary two-valued formal group defined by the polynomial
$$
z^2-2(x+y)z+(x-y)^2.
$$
Moreover, in this first-type case, we have:
\begin{equation}\label{B_of_x_and_phi_2}
B(x) = \left(\int\limits_{0}^{\sqrt{x}} \frac{\mathrm{d} t}{\sqrt{\phi(t^2)}} \right)^2,
\quad
\phi_2(x) = 8\int\limits_{0}^{x}\phi_1(t)\,\mathrm{d} t,
\end{equation}
where $\phi(t)=\phi_2(t)/(16t)$ and $\phi(0)=1$.
\end{theorem}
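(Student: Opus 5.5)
The plan is to reduce everything to the elementary group via a linearizing series, and then read off the differential equation by differentiating the sum and product of the two branches at $y=0$.

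\textbf{Step 1: a linearizing series exists.} Work over $R$, first assuming $R$ is an integral domain. Then pass to the general case by a universality argument, since $\Psi_1,\Psi_2$ are defined over a universal coefficient ring which is torsion-free after $\otimes\Q$. For the first type, Buchstaber's classification gives a normalized series $B(x)=x+O(x^2)$ satisfying
\[
B(z_{1,2})=\bigl(\sqrt{B(x)}\pm\sqrt{B(y)}\bigr)^2 .
\]
Set $x=a^2$, $y=b^2$ formally, and let $h$ be the odd series with $h(a)^2=B(a^2)$. Then $z_{1,2}=h^{-1}(h(a)\pm h(b))^2$. The problem is thus transported to the symmetric functions $\Psi_1=z_1+z_2$ and $\Psi_2=z_1z_2$ of these two branches.

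\textbf{Step 2: expand in $y$ to first order.} Write $y=b^2$ and put $w(s)=h^{-1}(s)^2$, so that $z_{1,2}=w(h(a)\pm h(b))$. Taylor expansion gives
\[
\Psi_1=2w(h(a))+w''(h(a))\,h(b)^2+O(b^4).
\]
Since $h(b)^2=B(y)=y+O(y^2)$, this yields $\phi_1(x)=w''(h(a))$.

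For the discriminant, $\sigma=(z_1-z_2)^2=4w'(h(a))^2h(b)^2+O(b^4)$, so $\phi_2(x)=4w'(h(a))^2$.

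\textbf{Step 3: identify the equation.} Now express $w'$ and $w''$ in terms of $B$. Let $s=h(a)$, so $w(s)=x$ and $s^2=B(x)$. Differentiating $B(w(s))=s^2$ once gives
\[
B'(x)\,w'=2s,
\]
and differentiating again gives
\[
B''(x)\,w'^2+B'(x)\,w''=2.
\]
Substituting $w''=\phi_1$ and $w'^2=\phi_2/4$ into the second identity gives $\tfrac14\phi_2B''+\phi_1B'=2$, which is exactly the stated equation $\tfrac12\phi_1B'+\tfrac18\phi_2B''=1$.

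Uniqueness of the solution with $B(0)=0$ needs a separate argument, because the equation is singular: $\phi_2(0)=0$ and $\phi_2=16x+\dots$ by the first-type normalization. I would check it coefficientwise. The recursion for the $x^n$ coefficient of $B$ has leading factor $n(\tfrac12\phi_1(0)+2(n-1))=2n^2$ (using $\phi_1(0)=2$), which is invertible in a $\Q$-algebra. So the solution is unique and equals the linearizing series.

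\textbf{Step 4: the integral formulas.} From $w'^2=\phi_2(x)/4$ and $w'\,ds=dx$ we get
\[
ds=\frac{dx}{\sqrt{\phi_2(x)}/2}.
\]
With $x=t^2$ and $\phi_2(t^2)=16t^2\phi(t^2)$ this becomes $ds=dt/\sqrt{\phi(t^2)}$. Integrating from $0$ to $\sqrt{x}$ gives $\sqrt{B(x)}$, which is the first formula.

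For the second formula, note that $\phi_2'=8\phi_1$ is equivalent to $\bigl(4w'^2\bigr)'_x=8w''$. This holds since $d(w'^2)/dx=2w'w''/w'=2w''$. Together with $\phi_2(0)=0$ it gives $\phi_2(x)=8\int_0^x\phi_1(t)\,dt$.

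\textbf{Main obstacle.} The hard part is Step 1 for a general (not integral-domain) $\Q$-algebra, i.e.\ justifying the existence of the linearization with square roots. To avoid invoking the classification there, I would try to construct $B$ directly from the ODE, which is possible by the coefficient recursion above. I would then verify $B(z_1)+B(z_2)=2(B(x)+B(y))$ and $(B(z_1)-B(z_2))^2=16B(x)B(y)$, using associativity of the two-valued law to propagate the identities from first order in $y$ to all orders. This propagation step is where the real work lies.
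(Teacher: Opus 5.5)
\textbf{There is a genuine gap, and it is the one you flag yourself.} Nothing in the proposal shows that the solution of the ODE linearizes the law over an arbitrary $\mathbb{Q}$-algebra. That is the actual content of the statement beyond the integral-domain classification. The paper gives no proof here, only the citation to Buchstaber's 1975 paper, which is also the source of the classification you invoke in Step 1.

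Your reduction to domains does not work. Every $\mathbb{Q}$-algebra is torsion-free, so that property buys nothing. What you would need is that the universal ring of first-type two-valued formal groups is \emph{reduced}. Then it would embed in a product of domains, the classification would apply on each factor, and your uniqueness of the ODE solution would glue the local linearizers. But you have no independent proof of reducedness. A posteriori that ring is a polynomial ring on the coefficients of $B$, but this is a consequence of the theorem, so the argument is circular. The fallback ``propagation by associativity'' is not carried out.

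As written, Steps 2--4 are correct but prove only the converse direction: \emph{if} a linearizing series exists, it solves the ODE and satisfies the two integral formulas. There is also a minor slip: the leading factor is $n\bigl(\tfrac12\phi_1(0)+2(n-1)\bigr)=n(2n-1)$, not $2n^2$. It is still invertible, so uniqueness holds.

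\textbf{One way to do the propagation.} For $G\in R[[x]]$ put $(T_yG)(x)=\tfrac12\bigl(G(z_1)+G(z_2)\bigr)$.
\begin{enumerate}[(1)]
\item Write $z_{1,2}=m\pm d$ with $m=\Psi_1/2$ and $d^2=\sigma/4$. Expanding gives $T_y=1+yD+y^2D_2+O(y^3)$ with $D=\tfrac12\phi_1\partial+\tfrac18\phi_2\partial^2$, and $(T_yG)(0)=G(y)$.
\item Associativity and commutativity give $(x*y)*w=(x*w)*y$ as multisets. Hence $T_yT_w=T_wT_y$, and therefore $[T_y,D]=0$.
\item Since $D(x^n)=n(2n-1)x^{n-1}+\dots$, the kernel of $D$ is the constants $R$. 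So $T_yB-B(x)-B(y)$ is killed by $D_x$ and vanishes at $x=0$, hence is zero. This gives $B(z_1)+B(z_2)=2\bigl(B(x)+B(y)\bigr)$.
\item For the product $B(z_1)B(z_2)=(B(x)-B(y))^2$ you need $D(B^2)=6B$, i.e.\ $\phi_2B'^2=16B$. Given the ODE, this is equivalent to $\phi_2'=8\phi_1$. That relation is not automatic; it must be extracted from associativity, as follows.
\item An operator commuting with $D$ is a power series in $D$. To see this, use the basis $P_n$ with $DP_n=P_{n-1}$ and $P_n(0)=\delta_{n0}$. Evaluating at $x=0$ then gives $D_2=b_2D+\tfrac16D^2$, where $b_2$ is the $x^2$-coefficient of $B$.
\item The direct expansion gives $\tfrac1{16}\phi_1\phi_2$ as the $\partial^3$-coefficient of $D_2$. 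Comparing with step (5) yields $\phi_2(\phi_2'-8\phi_1)=0$. Since $\phi_2=x(16+\dots)$ is a non-zero-divisor, $\phi_2'=8\phi_1$.
\item The same kernel argument as in (3) now gives $T_y(B^2)=B(x)^2+6B(x)B(y)+B(y)^2$, which together with (3) is the product identity.
\item Finally, $\phi_2B'^2=16B$ integrates directly to the stated formula for $B$, without needing $h$ or $w$.
\end{enumerate}
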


\section{The Buchstaber Genus}\label{The_Buchstaber_Genus}

A central result of this paper is the following

\begin{theorem}\label{Buchstaber_genus_theorem}
\leavevmode
\begin{enumerate}[{\bf (i)}]

\item Let
\begin{equation}\label{Integral_I_formula}
\begin{aligned}
u := I(x)=\int\limits_{0}^{\sqrt{x}}\frac{\dd t}{\sqrt{1+a_1t^2+a_2t^4+a_3t^6}},\\
g_2 = 4\left (\frac{a_1^2}{3}-a_2 \right ), \qquad
g_3=4\left(\frac{a_1a_2}{3}-\frac{2a_1^3}{27}-a_3 \right).
\end{aligned}
\end{equation}
Then
$$
x(u) = \frac{1}{\wp(u;g_2,g_3)-a_1/3}.
$$
$($If $g_2^3-27g_3^2 = 0$, then the function $
\wp(u; g_2, g_3) = -\frac{\dd^2}{\dd u^2}\log\sigma(u;g_2,g_3)$
corresponds to a degeneration of the Weierstrass $\sigma$-function$)$.

\item The logarithm and exponential of the formal two-valued group $\G_{\bm{a}}$ are given by the following formal series over the algebra $\Q\left[a_1, a_2,a_3\right]$:
\[
\begin{aligned}
B(x)
&= I^2(x)
 =\left(\int\limits_{0}^{\sqrt{x}}
 \frac{\dd t}{\sqrt{1+a_1t^2+a_2t^4+a_3t^6}}\right)^2,\\
B^{-1}(x)
&=\frac{1}{\wp(\sqrt{x};g_2,g_3)-a_1/3}.
\end{aligned}
\]

\item Let $Q(t) := 1 - a_1 t^2 + a_2 t^4 - a_3 t^6$. Then, over $\Q[a_1,a_2,a_3]$, the law $F_{\Bc}(u, v)$, its exponential $f_{\Bc}(u)$, and its logarithm $g_{\Bc}(u)$ have the form:
$$
F_{\Bc}(u,v)= \left[\left(\frac{u\sqrt{Q(v)}-v\sqrt{Q(u)}}{u^2-v^2}\right)^2+a_3u^2v^2\right]^{-1/2},
$$
\[
f_{\Bc}(u)=\frac{1}{\sqrt{\wp(u;g_2,-g_3)+\frac{a_1}{3}}},\qquad
g_{\Bc}(u) = \int\limits_{0}^{u}\frac{\dd t}{\sqrt{1-a_1t^2+a_2t^4-a_3t^6}}.
\]

\item The formal group law \(F_{\mathrm{Bc}}(u,v)\) is universal, over commutative \(\mathbb Q\)-algebras, among single-valued formal group laws with formal inverse \(\bar u=-u\) whose modulus square construction yields the two-valued formal group \(\mathbb G_{\boldsymbol a}\) with law \eqref{B_a_polynomial}, in the sense of Proposition \ref{two_valued_formal_group_roots}.

\item  The intersection of the classes $\Ff(u, v)$ and $F_{\Bc}(u,v)$ coincides with the Ochanine genus. This is characterized by the conditions
$\lambda_1(u)\equiv 1$ for $\Ff(u,v)$ and $a_3=0$ for $F_{\Bc}(u,v)$.

\end{enumerate}
\end{theorem}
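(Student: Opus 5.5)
I would prove the five parts in order, since each feeds the next. For \textbf{(i)}, substitute $t^2=s$ in $I(x)$, so that $u=\int_0^{x}\frac{\dd s}{2\sqrt{s(1+a_1s+a_2s^2+a_3s^3)}}$. Then substitute $s=1/(w-a_1/3)$. This turns $4s(1+a_1s+a_2s^2+a_3s^3)$, after multiplying by $(w-a_1/3)^4$, into a depressed cubic $4w^3-g_2w-g_3$. Expanding $(w-a_1/3)^3+a_1(w-a_1/3)^2+a_2(w-a_1/3)+a_3$ and matching coefficients gives exactly the stated $g_2,g_3$. Note that $s\to 0$ corresponds to $w\to\infty$. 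Hence $u=\int_w^\infty \dd w/\sqrt{4w^3-g_2w-g_3}$, which is the Weierstrass inversion $w=\wp(u)$. I would check the sign of the square root and the branch by comparing the leading terms $x\sim u^2$ and $\wp\sim u^{-2}$. The degenerate case $g_2^3=27g_3^2$ follows by the same computation, because the identity is an identity of formal series in $u$ with coefficients polynomial in $a_i$.

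For \textbf{(ii)}, I would apply Theorem~\ref{two_valued_group_differential_equation} to $\Bb_{\bm a}$. Writing $\Bb_{\bm a}=(1-a_2xy)^2z^2-\Psi_1'z+\dots$, normalize so that $\Psi_1,\Psi_2$ are the symmetric functions of the two roots, after dividing by the leading coefficient. Compute $\phi_1(x)=\partial_y\Psi_1|_{y=0}$ and $\sigma=\Psi_1^2-4\Psi_2$, which is essentially the discriminant of $\Bb_{\bm a}$ in $z$ divided by $(1-a_2xy)^4$. One should find $\phi_2(x)=16x(1+a_1x+a_2x^2+a_3x^3)$, so that $\phi(t)=1+a_1t+a_2t^2+a_3t^3$. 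Formula \eqref{B_of_x_and_phi_2} then gives $B=I^2$, and (i) inverts it. Checking $\Psi_2\equiv(x-y)^2$ modulo degree $3$ is immediate.

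For \textbf{(iii)}, set $B(x)=-g(u)^2$ with $x=-u^2$. Part (ii) forces $g(u)=I(-u^2)/i$, that is, $g(u)=\int_0^u\dd t/\sqrt{Q(t)}$ after $t\mapsto it$, so $g_{\Bc}$ is as stated. Then $f_{\Bc}(u)^2=-x(iu)=-1/(\wp(iu;g_2,g_3)-a_1/3)$. The homogeneity $\wp(iu;g_2,g_3)=-\wp(u;g_2,-g_3)$ gives the formula for $f_{\Bc}$. To get the closed form of $F_{\Bc}$, I would use the addition theorem for the elliptic curve $y^2=Q(t)$ in Euler's form. Because $g$ is the elliptic integral of $Q$, the sum $f(g(u)+g(v))$ is given by the classical Euler--Lagrange addition formula for $\int\dd t/\sqrt{Q}$ with $Q$ even of degree $6$. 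Equivalently, $1/F^2=\wp(g(u)+g(v))+a_1/3$ is expressed through the $\wp$-addition formula, with $\wp'$ replaced via $\sqrt{Q}$. The main obstacle is this algebraic verification. I would do it by checking that the proposed $F$ satisfies $\partial_u F\cdot\sqrt{Q(v)}\,\sqrt{Q(F)}\ldots$. Concretely, I would check the ODE $g'(F)\,\partial_vF=g'(v)$ at $v$ generic, or verify instead that $F(u,v)F(u,-v)$ and $F(u,v)^2+F(u,-v)^2$ satisfy $\Bb_{\bm a}$ with $x=-u^2$, $y=-v^2$, and then use that the formal group is determined by its logarithm.

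For \textbf{(iv)}, suppose $G$ is a formal group law with $\bar u=-u$ whose modulus square is $\G_{\bm a'}$ for some specialization. Its two-valued logarithm is $-g_G(u)^2$ and equals $B$ by uniqueness in Theorem~\ref{two_valued_group_differential_equation}. This determines $g_G$ up to sign, and the normalization $g_G(u)=u+\dots$ fixes it. Hence $G$ is the specialization of $F_{\Bc}$, and the specialization is unique because $a_1,a_2,a_3$ are read off from the coefficients of $B$. For \textbf{(v)}, compare exponentials. The class $\Ff$ has Krichever exponentials $e^{\alpha x}/\Phi$, and odd ones require $\alpha=0$. By Buchstaber's results the odd members of $\Ff$ with $\lambda_1\equiv1$ are those with $f'^2$ a polynomial of degree $4$ in $f$, which is the Ochanine case. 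Meanwhile $f_{\Bc}'^2=Q(f_{\Bc})$ has degree $6$ unless $a_3=0$. So the two classes intersect exactly when $a_3=0$, where $F_{\Bc}$ reduces to Euler's addition law $\bigl(u\sqrt{Q(v)}+v\sqrt{Q(u)}\bigr)/(1-a_2u^2v^2)$, which is the Ochanine law.
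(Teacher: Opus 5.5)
Parts (i)--(iv) follow the paper's route. In (i) you use the same substitution $t^2=s$, $s=1/(w-a_1/3)$. In (ii) you apply Buchstaber's Theorem~6.4 with $\phi_2=16x(1+a_1x+a_2x^2+a_3x^3)$. In (iii) you take $g_{\Bc}=\int_0^u Q^{-1/2}\,\dd t$, get $f_{\Bc}$ from $\wp(iu;g_2,g_3)=-\wp(u;g_2,-g_3)$, and obtain the law from the $\wp$-addition theorem. Part (iv) is derived as a consequence, and you spell it out more than the paper does.

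\textbf{The gap is in (v).} From oddness of $f_{\Kr}=e^{\alpha u}/\Phi(u,z)$ you extract only $\alpha=0$. You then invoke ``the odd members of $\Ff$ with $\lambda_1\equiv1$''. That restriction presupposes what must be shown: the intersection has to be computed over \emph{all} odd members of $\Ff$. Nothing in your argument shows that an odd Krichever exponential satisfies a quartic equation.

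In fact the degree count fails without an extra input. For any odd $f=e^{\alpha u}/\Phi(u,z)$, the identity $\Phi(u,z)\Phi(-u,z)=\wp(z)-\wp(u)$ gives $f^2=1/(\wp(u)-\wp(z))$, and hence
\[
f'^2=\prod_{i=1}^{3}\bigl(1+(\wp(z)-e_i)f^2\bigr).
\]
The $f^6$-coefficient of this is $\wp'(z)^2/4$. This is exactly the shape of $f_{\Bc}^2=1/(\wp(u;g_2,-g_3)+a_1/3)$, so on its own it is genuinely sextic.

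What is missing is that oddness also forces $z$ to be a half-period. Write oddness as $V=\frac{\sigma(z-u)}{\sigma(z+u)}e^{2(\zeta(z)-\alpha)u}\equiv1$. The first $u$-derivative of $\log V$ at $u=0$ equals $-2\alpha$, and the third equals $2\wp'(z)$, so $\alpha=0$ and $\wp'(z)=0$. Only then does one factor above become $1$ and $f'^2$ become quartic in $f$. Comparison with $f_{\Bc}'^2=Q(f_{\Bc})$ then forces $a_3=0$. Your converse direction is fine: $a_3=0$ gives the Ochanine law, which lies in $\Ff$ with $\lambda_1\equiv1$.

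There are also some smaller slips.
\begin{itemize}
\item In (ii), the $z^2$-coefficient of $\Bb_{\bm a}$ is $(1-a_2xy)^2-4a_3xy(x+y+a_1xy)$, not $(1-a_2xy)^2$. Normalizing by the latter drops $8a_3x^3$ from $\phi_1$ and breaks the consistency $\phi_2(x)=8\int_0^x\phi_1(t)\,\dd t$. Your $\phi_2$ survives only because the discriminant vanishes at $y=0$.
\item In (iii), $y^2=Q(t)$ is a genus-$2$ curve for generic $a_i$. The differential $\dd t/\sqrt{Q(t)}$ becomes elliptic only after $T=t^{-2}$, so the $\wp$-addition route, which is the paper's, is the one to use.
\item In your alternative check for (iii), the roots of $\Bb_{\bm a}$ are $-F(u,\pm v)^2$, not $F(u,\pm v)$. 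To identify the explicit formula with $f_{\Bc}(g_{\Bc}(u)+g_{\Bc}(v))$, it is not enough that a formal group law is determined by its logarithm. The explicit formula is not yet known to be a formal group law. You need the root-comparison argument used in the proof of Corollary~\ref{coroll_integral_universality}.
\end{itemize}
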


\begin{proof}

\begin{enumerate}[{\bf (i)}]

\item is obtained by a series of substitutions
$$
u=t^2\to u=\frac{1}{v}\to v=w-\frac{a_1}{3}
$$
in the integral $I(x)$.

\item follows from the first part of Theorem 6.4 in \cite{Buchstaber75} and from {\bf (i)}. More concretely, rewrite the two-valued law \eqref{B_a_polynomial} in the form \eqref{general_two-valued_law}, where $$\begin{aligned}
\Psi_1(x,y)&=
\frac{
2x+2y+4a_1xy+2a_2x^2y+2a_2xy^2+4a_3x^2y^2
}{
1-2a_2xy-4a_3x^2y-4a_3xy^2+a_2^2x^2y^2-4a_1a_3x^2y^2
},\\
\Psi_2(x,y)&=
\frac{x^2-2xy+y^2}{
1-2a_2xy-4a_3x^2y-4a_3xy^2+a_2^2x^2y^2-4a_1a_3x^2y^2
}.\end{aligned}$$ Direct computations give $$\phi_1 = 2 (1 + 2 a_1 x + 3 a_2 x^2 + 4 a_3 x^3),\qquad \phi_2(x) = 16 x (1 + a_1 x + a_2 x^2 + a_3 x^3).$$ The desired formula for $B(x)$ follows from formula \eqref{B_of_x_and_phi_2}.

\item According to the modulus square construction (see Section \ref{modulus_square_construction}),
$$
B(x) = g(u)g(\overline{u}) = -g(u)^2,
$$
where $x=u\overline{u}$ and $g(u)$ is the logarithm of some formal group law $F(u, v)$.
Assume that $\overline{u} = -u$, or, equivalently, $g(-u)=-g(u)$.
Then $u = \sqrt{-x}$. From this and part {\bf (i)} it follows that
$$
g(u) = \int\limits_{0}^{u}Q(t)^{-1/2}\dd t,
$$
where $Q(t) := 1 - a_1 t^2 + a_2 t^4 - a_3 t^6$.
Applying part {\bf (i)} to the case $u:=I(w^2)$ gives the exponential $w(u)=f_{\Bc}(u)$.

Now let us derive the law $F_{\Bc}(u, v)$.
Let $\wpp(z) := \wp(z; g_2, -g_3)$, $u=f_{\Bc}(z)$, $v=f_{\Bc}(w)$.
Then
$$
\wpp(z) = \frac{1}{u^2}-\frac{a_1}{3},
\qquad
\wpp'(z)=-\frac{2\sqrt{Q(u)}}{u^3}.
$$
Substituting these formulas into the addition law
$$
\wpp(z+w) = -\wpp(z)-\wpp(w)+\frac{1}{4}\left(\frac{\wpp'(z)-\wpp'(w)}{\wpp(z)-\wpp(w)}\right)^2
$$
and carrying out straightforward computations, we obtain the law $F_{\Bc}(u, v)$.

\item follows from the previous points.

\item Assume that $F_{\Bc}(u, v) = \Ff(u, v)$ for some series $\lambda_1$, $\lambda_2$.
Since $f_{\Bc}$ is an odd function, we must require that
$$
f_{\Kr}(u) = \frac{e^{\alpha u}}{\Phi(u, z)}
$$
be odd. Hence
$$
V:= \frac{\sigma(z-u)}{\sigma(z+u)}e^{2(\zeta(z)-\alpha)u}\equiv1.
$$
From
$$
\left.\frac{\partial \log V}{\partial u}\right|_{u=0}=0
\qquad \text{and} \qquad
\left.\frac{\partial^3 \log V}{\partial u^3}\right|_{u=0}=0
$$
we obtain $\alpha=0$ and $\wp'(z)=0$.
It is known (see, for example, \cite{Krichever80}) that
$$
\Phi(u, z)\Phi(-u, z) = \wp(z) - \wp(u).
$$
Hence, from the oddness condition, we obtain:
$$
f^2_{\Kr}(u)=\frac{1}{\Phi^2(u, z)} = -\frac{1}{\wp(z) - \wp(u)}.
$$
Let
$$
\wp'(u)^2=4(\wp(u)-e_1)(\wp(u)-e_2)(\wp(u)-e_3).
$$
Then from
$$
f'_{\Kr}(u)^2=
\left(1+(e_1-e_2)f_{\Kr}(u)^2\right)
\left(1+(e_1- e_3)f_{\Kr}(u)^2\right)
$$
and
$$
f_{\Bc}'(u)^2=1-a_1f_{\Bc}(u)^2+a_2f_{\Bc}(u)^4-a_3f_{\Bc}(u)^6
$$
we get $a_3=0$.

\end{enumerate}
\end{proof}

Recall
\begin{equation}\label{R_Bc}
R_{\Bc}:=\Z\!\left[\frac{1}{2},a_1,a_2,a_3\right],\quad \deg a_i=-4i,\qquad i=1,2,3.
\end{equation}

\begin{coroll}\label{coroll_Buchstaber_genus}
The formal group law $F_{\Bc}(u,v)$ corresponds to a Hirzebruch genus
$$
\Bc:\Omega_{\Uu}\to R_{\Bc},
$$
which we call the Buchstaber genus.
\end{coroll}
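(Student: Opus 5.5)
The plan is to show that $F_{\Bc}(u,v)$, which Theorem~\ref{Buchstaber_genus_theorem}\,(iii) defines a priori only over $\Q[a_1,a_2,a_3]$, actually has all its coefficients in $R_{\Bc}=\Z[1/2,a_1,a_2,a_3]$ and is homogeneous for the grading. The corollary then follows from Quillen's theorem. By that theorem, the formal group law $F_{\Uu}$ of complex cobordism is universal: $\Omega_{\Uu}$ is the Lazard ring. Hence a formal group law over $R_{\Bc}$ is the same thing as a ring homomorphism $\Omega_{\Uu}\to R_{\Bc}$, i.e. a Hirzebruch genus. That homomorphism is $\Bc$.

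\textbf{Step 1 (formal group law axioms).} By Theorem~\ref{Buchstaber_genus_theorem}\,(iii), over $\Q[a_1,a_2,a_3]$ we have
\[
F_{\Bc}(u,v)=f_{\Bc}\bigl(g_{\Bc}(u)+g_{\Bc}(v)\bigr),
\]
with $g_{\Bc}(u)=u+O(u^3)$. So $F_{\Bc}$ is a commutative formal group law with inverse $\bar u=-u$ over the rationalization. Since $R_{\Bc}\subset \Q[a_1,a_2,a_3]$ is torsion free, the axioms (unit, commutativity, associativity) hold over $R_{\Bc}$ as soon as the coefficients lie in $R_{\Bc}$. It therefore suffices to prove integrality.

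\textbf{Step 2 (integrality, the main point).} First, $\sqrt{Q(t)}=(1-a_1t^2+a_2t^4-a_3t^6)^{1/2}$ lies in $R_{\Bc}[[t]]$, because the binomial coefficients $\binom{1/2}{k}$ have only powers of $2$ in their denominators. Put
\[
P(u,v)=u\sqrt{Q(v)}-v\sqrt{Q(u)}\in R_{\Bc}[[u,v]].
\]
Since $P(u,u)=0$, we can write $P=(u-v)P_1$ with $P_1\in R_{\Bc}[[u,v]]$; this is division by $u-v$ in the power series ring, done degree by degree. The lowest-order term of $P$ is $u-v$, so $P_1=1+(\text{higher terms})$.

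Clearing denominators in the formula of Theorem~\ref{Buchstaber_genus_theorem}\,(iii) gives
\[
F_{\Bc}(u,v)=\frac{u^2-v^2}{\sqrt{P^2+a_3u^2v^2(u^2-v^2)^2}}
=\frac{u+v}{\sqrt{P_1^2+a_3u^2v^2(u+v)^2}}.
\]
The sign of the square root is fixed by requiring $F_{\Bc}(u,0)=u$. The radicand has the form $1+(\text{higher terms})$ with coefficients in $R_{\Bc}$. Hence its $(-1/2)$-power again lies in $R_{\Bc}[[u,v]]$, by the same binomial argument, and so $F_{\Bc}\in R_{\Bc}[[u,v]]$.

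I expect this step to be the only real obstacle. The naive expression has the factor $u^2-v^2$ in a denominator, and one has to see that $u+v$ cancels against the numerator $u^2-v^2$ while $u-v$ divides $P$ exactly. The factorization above is what handles both.

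\textbf{Step 3 (grading and conclusion).} Give $u,v$ degree $2$ (cohomological, so that $u$ is a first Chern class) and $a_i$ degree $-4i$. Then each $a_it^{2i}$ in $Q$ has degree $0$, so $\sqrt{Q}$ and $P_1$ are homogeneous of degree $0$ and $F_{\Bc}$ is homogeneous of degree $2$. Quillen's theorem then gives a unique graded ring homomorphism $\Bc\colon\Omega_{\Uu}\to R_{\Bc}$ carrying $F_{\Uu}$ to $F_{\Bc}$. Its exponential is $f_{\Bc}$, which identifies $\Bc$ as the Hirzebruch genus with characteristic series $u/f_{\Bc}(u)$.
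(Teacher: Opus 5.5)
Your proposal is correct and follows essentially the same route as the paper: integrality of $\sqrt{Q}$ over $\Z[1/2]$, exact division of $u\sqrt{Q(v)}-v\sqrt{Q(u)}$ by $u-v$ (your $P_1$ is the paper's $H$), the resulting rewriting $F_{\Bc}=(u+v)/\sqrt{P_1^2+a_3u^2v^2(u+v)^2}$ with coefficients in $R_{\Bc}$, and then universality of $F_{\Uu}$ (Quillen/Lazard) to obtain the graded homomorphism $\Bc\colon\Omega_{\Uu}\to R_{\Bc}$. Your explicit checks are details the paper only asserts here: that the axioms transfer through the inclusion $R_{\Bc}\subset\Q[a_1,a_2,a_3]$, and that $F_{\Bc}$ is homogeneous of degree $2$.
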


\begin{proof}
The formula in Theorem {\rm\ref{Buchstaber_genus_theorem}(iii)} defines a formal group law
\[
F_{\Bc}(u,v)\in R_{\Bc}[[u,v]].
\]

Since $Q(0)=1$ and $2$ is invertible in $R_{\Bc}$, we have $\sqrt{Q(t)}\in R_{\Bc}[[t]]$. The numerator in
\[
H(u,v):=\frac{u\sqrt{Q(v)}-v\sqrt{Q(u)}}{u-v}
\]
is divisible by $u-v$, so $H\in R_{\Bc}[[u,v]]$ and $H(0,0)=1$. Choosing the branch with leading term $u+v$, the formula of Theorem \ref{Buchstaber_genus_theorem}{\rm (iii)} becomes
\[
F_{\Bc}(u,v)
 =\frac{u+v}{\sqrt{H(u,v)^2+a_3u^2v^2(u+v)^2}}.
\]
Thus \(F_{\mathrm{Bc}}(u,v)\in R_{\mathrm{Bc}}[[u,v]]\). By the universality of the formal group law of complex cobordism, equivalently by the identification of \(\Omega_{\Uu}\) with the Lazard ring, \(F_{\mathrm{Bc}}\) determines a unique graded ring homomorphism
\[
\mathrm{Bc}\colon \Omega_{\Uu}\longrightarrow R_{\mathrm{Bc}}.
\]After extension of scalars to \(\mathbb Q[a_1,a_2,a_3]\), the associated formal group law has logarithm \(g_{\mathrm{Bc}}\) and exponential \(f_{\mathrm{Bc}}\) from Theorem \ref{Buchstaber_genus_theorem}{\rm (iii)} . Hence this homomorphism is precisely the Buchstaber genus. In particular, the Buchstaber genus takes values in \(R_{\mathrm{Bc}}\).
\end{proof}

\begin{coroll}\label{coroll_integral_universality}
The formal group law $F_{\Bc}(u,v)$ over $R_{\Bc}$ is universal among
one-dimensional commutative formal group laws with formal inverse
$\bar u=-u$ whose modulus square construction belongs to the
Buchstaber family. Namely, if $S$ is a commutative
$\Z[1/2]$-algebra and $F(u,v)$ is such a formal group law over $S$,
then there exists a unique $\Z[1/2]$-algebra homomorphism
\[
\varphi\colon R_{\Bc}\longrightarrow S
\]
such that
\[
F(u,v)=\varphi_*F_{\Bc}(u,v).
\]  where \(\varphi_*F_{\Bc}\in S[[u,v]]\) denotes the formal group law obtained from \(F_{\Bc}\) by applying \(\varphi\) coefficientwise.
\end{coroll}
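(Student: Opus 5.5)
The plan is to reduce the statement to an identity of power series over a $\Z[1/2]$-algebra $S$, without passing to $S\otimes\Q$. This matters because $S$ may have torsion, so the rational argument of Theorem~\ref{Buchstaber_genus_theorem}(iv) cannot be used directly.

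\emph{Defining $\varphi$.} Suppose the modulus square of $F$ is $\mathbb G_{\boldsymbol a}$ for some parameters $\boldsymbol a=(s_1,s_2,s_3)\in S^3$. Put $\varphi(a_i)=s_i$. The coefficients $s_i$ can be read off from $\Psi_1,\Psi_2$ in low degrees; for instance $\phi_1(x)=2(1+2s_1x+3s_2x^2+4s_3x^3)$ from the proof of part (ii). I would not rely on these formulas to pin down $s_3$, since $3$ and $4$ need not be invertible in $S$. Instead I would extract the $s_i$ from the discriminant
\[
\Psi_1^2-4\Psi_2=\frac{16xy(1+s_1 x+\dots)\cdots}{\cdots}.
\]
Concretely, expand $\Bb_{\boldsymbol s}(z;x,y)$. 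The coefficient of $x^2y^2z^2$ is $a_2^2-4a_1a_3$, and lower terms involve $4a_1xyz$ and $-2a_2xyz(x+y+z)$. The coefficient of $xyz\cdot z$ in the square part is $-2a_2$, so $s_2$ is determined once $2$ is invertible. Similarly, $s_1$ comes from the coefficient $-4a_1$ of $xyz$, and $s_3$ from the coefficient of $x^2y^2z$, which is $-4a_3+\dots$. This gives uniqueness of $\varphi$, since $a_1,a_2,a_3$ are polynomial generators and $\varphi_*F_{\Bc}$ determines its modulus square.

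\emph{Existence.} Set $G:=\varphi_*F_{\Bc}$, which lies in $S[[u,v]]$ by Corollary~\ref{coroll_Buchstaber_genus}. Then $F$ and $G$ are two formal group laws over $S$ with inverse $-u$ and the same modulus square. I must show $F=G$. The key identity from the modulus square is
\[
F(u,v)F(-u,-v)=-F(u,v)^2=z_1,\qquad F(u,-v)^2=-z_2,
\]
where $z_1$ and $z_2$ are the two roots of $\Bb_{\boldsymbol s}(z;-u^2,-v^2)=0$ in $S[[u,v]]$, distinguished by their leading terms $-(u+v)^2$ and $-(u-v)^2$. The same holds for $G$, so $F(u,v)^2=G(u,v)^2$. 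Both series have leading term $u+v$, and $2$ is a unit, so square roots with a prescribed unit-type leading behaviour are unique. I would make this precise by writing $F=(u+v)\cdot A$ and $G=(u+v)\cdot B$ with $A,B\equiv1$: then $A^2=B^2$ gives $(A-B)(A+B)=0$, and $A+B$ is invertible since it is $\equiv 2$, hence $A=B$.

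\emph{Main obstacle.} The delicate step is showing that $F^2$ really is divisible by $(u+v)^2$ in the required form, and that the root $z_1$ is intrinsically identified rather than only up to swapping $z_1$ and $z_2$. For the identification, the two roots have distinct leading terms, and $F(u,v)F(-u,-v)$ has leading term $-(u+v)^2$, which pins it to $z_1$. For divisibility, $F(u,-u)=0$ because $-u$ is the formal inverse, so $u+v$ divides $F$ and I can write $F=(u+v)A$ with $A\equiv 1$. The same argument applies to $G$. Together these give $F=G=\varphi_*F_{\Bc}$.
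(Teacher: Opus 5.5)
Your overall structure is the paper's: define $\varphi(a_i)=s_i$, compare $F$ with $G=\varphi_*F_{\Bc}$, obtain $F(u,v)^2=G(u,v)^2$, and cancel the unit $A+B$. The gap is in the middle step. You assert that the roots of $\Bb_{\boldsymbol s}(z;-u^2,-v^2)$ in $S[[u,v]]$ are ``the two roots, distinguished by their leading terms.'' Your discussion of the main obstacle only shows that $-F(u,v)^2$, rather than $-F(u,-v)^2$, is the member of $F$'s pair with leading term $-(u+v)^2$.

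The comparison actually needs more: \emph{every} root of this quadratic with leading term $-(u+v)^2$ must equal $-G(u,v)^2$. That is a uniqueness-of-roots statement, which is a domain-type fact, and $S[[u,v]]$ need not be a domain. Since $S$ is an arbitrary commutative $\Z[1/2]$-algebra, possibly with nilpotents, a quadratic can have more than two roots; already $z^2=0$ over $\Z[1/2][\varepsilon]/(\varepsilon^2)$ is solved by every multiple of $\varepsilon$. You correctly warned that torsion blocks the rational argument, but you then tacitly use ``a quadratic has at most two roots.''

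The missing idea is the non-zero-divisor argument at the heart of the paper's proof. Put $q=-F(u,v)^2$ and $r_\pm=-G(u,\pm v)^2$. Because $r_++r_-=2M/C$ and $r_+r_-=D/C$, the Buchstaber polynomial factors as $C(z-r_+)(z-r_-)$, where $C=(1-s_2xy)^2-4s_3xy(x+y+s_1xy)$ is a unit. Hence $\Bb_{\boldsymbol s}(q;x,y)=0$ gives $(q-r_+)(q-r_-)=0$. The series $q-r_-$ vanishes at $u=0$ and at $v=0$, so it equals $uv\,U$ with $U(0,0)=-4$. Since $2$ is invertible, $U$ is a unit, so $q-r_-$ is a non-zero-divisor and $q=r_+$. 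This is an essential use of $1/2\in S$ that your proposal never makes.

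You should also justify the factorization for $G$ rather than just saying ``the same holds for $G$.'' The identities for $r_\pm$ hold over $\Q[a_1,a_2,a_3]$; the paper checks them directly via $z_\pm=D/(M\pm T)$ and $M^2-T^2=DC$. They therefore hold over the subring $R_{\Bc}$, and then over $S$ after applying $\varphi$. With these additions the rest of your argument goes through, including uniqueness of $\varphi$ via the coefficients $-4a_1$, $-2a_2$, $-4a_3$ of $\Bb_{\boldsymbol a}$. Incidentally, $4$ is invertible in $S$, so only the factor $3$ in the formula for $\phi_1$ was a real obstruction.
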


\begin{proof}
The law $F_{\Bc}(u, v)$ is a formal group law over $\Q[a_1,a_2,a_3]$ by Theorem \ref{Buchstaber_genus_theorem}{\rm (iii)}. Since $R_{\Bc}$ is a subring of $\Q[a_1,a_2,a_3]$, all formal group identities already hold over $R_{\Bc}$. We now compute its modulus square. Put
\[
x=-u^2,\qquad y=-v^2,\qquad P(t)=1+a_1t+a_2t^2+a_3t^3,
\]
so that $Q(u)=P(x)$ and $Q(v)=P(y)$, and define
\[
\begin{aligned}
D&=(x-y)^2,\\
M&=xP(y)+yP(x)-a_3xyD
   =x+y+2a_1xy+a_2xy(x+y)+2a_3x^2y^2,\\
T&=2uv\sqrt{P(x)P(y)},\\
C&=(1-a_2xy)^2-4a_3xy(x+y+a_1xy).
\end{aligned}
\]
For
\[
z_+=-F_{\Bc}(u,v)^2,
\qquad
z_-=-F_{\Bc}(u,-v)^2,
\]
the defining formula gives, in the fraction field,
\[
z_\pm=\frac{D}{M\pm T}.
\]
A direct simplification yields $M^2-T^2=DC$, and hence
\[
z_++z_-=\frac{2M}{C},
\qquad
z_+z_-=\frac{D}{C}.
\]
Therefore $z_+$ and $z_-$ satisfy $Cz^2-2Mz+D=0$. On the other hand, expansion of \eqref{B_a_polynomial} as a quadratic polynomial in $z$ gives the identity
\[
\Bb_{\boldsymbol a}(z;x,y)=Cz^2-2Mz+D.
\]
Thus the modulus square of $F_{\Bc}$ is exactly $\G_{\boldsymbol a}$ over $R_{\Bc}$.

Now, let $S$ be a commutative $\Z[1/2]$-algebra, and let $F(u,v)$ be a
formal group law over $S$ with formal inverse $-u$ whose modulus square
is a specialization $\mathbb G_{\boldsymbol\alpha}$ of the
Buchstaber family, where $\alpha_1,\alpha_2,\alpha_3\in S$. Define
\[
\varphi\colon R_{\Bc}\longrightarrow S,
\qquad
\varphi(a_i)=\alpha_i.
\]
The explicit modulus square computation above shows that
$\varphi_*F_{\Bc}$ has the same modulus square as $F$.

Put
\[
q=-F(u,v)^2,\qquad
r_+=-\varphi_*F_{\Bc}(u,v)^2,\qquad
r_-=-\varphi_*F_{\Bc}(u,-v)^2.
\]
Then $q,r_+,r_-$ are roots, in the indicated pairs, of the same
quadratic Buchstaber polynomial (see \eqref{two-valued_law}). Hence
\[
(q-r_+)(q-r_-)=0.
\]
Since both formal group laws have unit $0$, the series $q-r_-$
vanishes after setting either $u=0$ or $v=0$. Therefore
\[
q-r_-=uv\,U(u,v)
\]
for some $U\in S[[u,v]]$. Its lowest homogeneous term is $-4uv$, so
$U(0,0)=-4$. Since $2$ is invertible in $S$, the series $U$ is a
unit. Multiplication by $uv$ is injective in $S[[u,v]]$, and hence
$q-r_-$ is a non-zero-divisor. Consequently $q=r_+$, that is,
\[
F(u,v)^2=\varphi_*F_{\Bc}(u,v)^2.
\]

Because the formal inverse of both laws is $-u$, both series are
divisible by $u+v$:
\[
F(u,v)=(u+v)A(u,v),\qquad
\varphi_*F_{\Bc}(u,v)=(u+v)B(u,v),
\]
where $A(0,0)=B(0,0)=1$. It follows that $A^2=B^2$. Since $A+B$ has
constant term $2$ and is therefore a unit, we obtain $A=B$. Thus
\[
F=\varphi_*F_{\Bc}.
\]

Finally, the parameters $\alpha_1,\alpha_2,\alpha_3$ are uniquely
determined by the coefficients of the Buchstaber polynomial, so the
homomorphism $\varphi$ is unique.
\end{proof}

In \cite{Buchstaber_Veselov_24}, it was established that the exponential of the universal formal group law of complex cobordism is given by the series
$$
f_{\Uu}(u)=u+\sum_{n\ge 1}[\Theta_n]\frac{u^{n+1}}{(n+1)!},
$$
where $[\Theta_n]$ denotes the cobordism class of a smooth theta divisor (of complex dimension $n$) on a general principally polarised abelian variety $\mathcal A_{n+1}$. 

It is well known that $\Omega_{\Uu}\otimes\Q\cong\Q[[\CP^1], [\CP^2],..., [\CP^n],...]$, see \cite{Novikov62}. In \cite[formula (4)]{Buchstaber_Veselov26} the following identity in $\Omega_{\Uu}\otimes\Q$ was established: $$[\CP^n] = (n+1)\Lll_n(\tau_1,...,\tau_n),\quad \tau_k = \frac{[\Theta_k]}{(k+1)!},$$ where $\Lll_n$ denotes the Lagrange inversion polynomial. Hence, $$\Omega_{\Uu}\otimes\Q\cong\Q[[\Theta_1],...,[\Theta_n],...].$$ The ring $\Z[[\Theta_1],...,[\Theta_n],...]$ is a proper subring of $\Omega_{\Uu}$. By a famous result of Milnor and Novikov \cite{Milnor60,Novikov60}, $\Omega_{\Uu}$ is a polynomial ring $\Z[u_1,\ldots,u_n,\ldots]$ with one generator $u_n\in\Omega_{\Uu}^{-2n}$ in every dimension $2n$ (where $n\geq 1$). The problem of finding good geometric representatives for the generators $u_n$ remains open.   

From the universality of the exponential $f_{\Uu}$, we obtain the identity:
$$
f_{\Bc}(u)=u+\sum_{n\ge 1}\Bc(\Theta_n)\frac{u^{n+1}}{(n+1)!}.
$$
From the oddness of $f_{\Bc}(u)$ it follows that
$$
\Bc(\Theta_{2n+1}) = 0 \qquad \text{for all } n\geq 0.
$$
We have:
\begin{equation}\label{Theta_2_4_6}
\Bc(\Theta_2)=-a_1,\quad
\Bc(\Theta_4)=a_1^2+12a_2,\quad
\Bc(\Theta_6)=-a_1^3-132a_1a_2-360a_3.
\end{equation}

Observe that the first three nonzero values $\Bc(\Theta_n)$ lie in the ring $\Z[a_1,a_2,a_3]$. It is not a coincidence.

\begin{prop}\label{Bc_of_Theta_is_in_Z}
The values $\Bc(\Theta_n)$ belong to the ring $\Z[a_1,a_2,a_3]$.
\end{prop}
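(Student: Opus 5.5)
The values $\Bc(\Theta_n)$ are, up to sign and grading, the Hurwitz coefficients of $f_{\Bc}(u)$. We have $f_{\Bc}(u)=\sum_{m\ge 0} c_m u^{2m+1}/(2m+1)!$ with $c_0=1$ and $c_m=\Bc(\Theta_{2m})$, and the odd coefficients vanish. So it suffices to show that all $c_m$ lie in $\Z[a_1,a_2,a_3]$.

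The plan is to use the differential equation from Theorem~\ref{Buchstaber_genus_theorem}(iii). Since $g_{\Bc}'(u)=Q(u)^{-1/2}$, the exponential satisfies
\[
f'(u)^2 = 1-a_1f^2+a_2f^4-a_3f^6 .
\]
Differentiating and cancelling $2f'$ (legitimate over $\Q[a_1,a_2,a_3][[u]]$, since $f'$ has constant term $1$) gives the autonomous second-order equation
\[
f''(u) = -a_1 f + 2a_2 f^3 - 3a_3 f^5,
\]
whose right-hand side is a polynomial in $f$ with coefficients in $\Z[a_1,a_2,a_3]$.

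We then work with the ring of Hurwitz series $\mathcal H_{\Lambda}[[u]]$ over $\Lambda=\Z[a_1,a_2,a_3]$, namely the series $\sum b_k u^k/k!$ with $b_k\in\Lambda$. The key facts are that this set is closed under products and under integration $\int_0^u$ (the latter just shifts the coefficients). Closure under products holds because
\[
\frac{u^i}{i!}\cdot\frac{u^j}{j!}=\binom{i+j}{i}\frac{u^{i+j}}{(i+j)!}.
\]

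The integrality then follows by induction on the degree. Write $f=u+\sum_{k\ge 2} b_k u^k/k!$ and rewrite the equation as
\[
f(u) = u + \int_0^u\!\!\int_0^s P(f(t))\,\dd t\,\dd s, \qquad P(f)=-a_1f+2a_2f^3-3a_3f^5 .
\]
The coefficient of $u^{k}/k!$ on the right is the coefficient of $u^{k-2}/(k-2)!$ in $P(f)$. This coefficient involves only $b_1=1,\dots,b_{k-2}$, combined through binomial products with coefficients in $\Lambda$. By induction, if $b_j\in\Lambda$ for all $j<k$, then $b_k\in\Lambda$. A sanity check confirms the normalization: the first step gives $b_3=-a_1$, matching $\Bc(\Theta_2)=-a_1$ in \eqref{Theta_2_4_6}. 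For $b_5$, the term $2a_2f^3$ contributes $2a_2\cdot 3!=12a_2$, and the term $-a_1f$ contributes $-a_1b_3=a_1^2$, giving $a_1^2+12a_2$ as required.

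The only delicate point is the derivation of the second-order equation, specifically the branch and sign conventions relating $f_{\Bc}$ to $Q$. I would fix these using $g_{\Bc}'=Q^{-1/2}$ directly. The Weierstrass form of $f_{\Bc}$ is not needed, and the Hurwitz induction itself is routine.
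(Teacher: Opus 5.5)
Your proposal is correct and follows essentially the same route as the paper: from $(f')^2=1-a_1f^2+a_2f^4-a_3f^6$ you derive $f''=-a_1f+2a_2f^3-3a_3f^5$, and then compare Hurwitz coefficients inductively. The only difference is presentation. The paper writes the resulting recursion for $b_n=\Bc(\Theta_{2n})$ explicitly with multinomial coefficients, whereas you obtain the same recursion from the closure of $\mathcal H_{\Lambda}[[u]]$ under products and integration.
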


\begin{proof}
For brevity, write $f:=f_{\Bc}$. From Theorem \ref{Buchstaber_genus_theorem}, point {\bf (iii)}, we know that \begin{equation}\label{f_prime_series}f'(x)=\sqrt{1-a_1f(x)^2+a_2f(x)^4-a_3f(x)^6}.\end{equation} Squaring and differentiating \eqref{f_prime_series} gives: \begin{equation}\label{f_two_primes}f''=-a_1f+2a_2f^3-3a_3f^5, \quad \text{where } f(0)=0,\ f'(0)=1.\end{equation} Denote $b_n=\Bc(\Theta_{2n})$, $n>0$ and $b_0=1$. Then we have: \begin{equation}\label{f_series}f(x)=\sum_{n\ge0} b_n\,\frac{x^{2n+1}}{(2n+1)!}.\end{equation} Using \eqref{f_two_primes} and \eqref{f_series}, the coefficient comparison implies $$\begin{aligned}
b_{n+1}
&=-a_1 b_n \\
&\quad+2a_2
\sum_{i+j+k=n-1}
\binom{2n+1}{2i+1,\,2j+1,\,2k+1} b_i b_j b_k \\
&\quad-3a_3
\sum_{i_1+\cdots+i_5=n-2}
\binom{2n+1}{2i_1+1,\dots,2i_5+1}
\prod_{r=1}^5 b_{i_r}, \ n\geq 2.
\end{aligned}$$ Hence, $b_n\in\Z[a_1,a_2,a_3]$.
\end{proof}

Now we compute the values of the Buchstaber genus on $\CP^n$'s. Applying the genus to Mishchenko’s logarithm for the formal group law $F_{\Uu}(u, v)$, we get:
$$
g_{\Bc}(u)=\sum_{n\ge 0}\Bc(\CP^n)\frac{u^{n+1}}{n+1}.
$$
Hence,
$$
\Bc(\CP^{2n+1})=0 \qquad \text{for all } n\geq 0.
$$
We obtain:
$$
\Bc(\CP^2) = \frac{a_1}{2},\quad
\Bc(\CP^4) = \frac{3a_1^2-4a_2}{8},\quad
\Bc(\CP^6) =\frac{5a_1^3-12a_1a_2+8a_3}{16}.
$$

Recall that the Krichever genus corresponds to a formal group law with the exponential \eqref{f_Krichever} given by the Baker–Akhiezer function $\Phi(x, z)$. This genus depends on 4 parameters: $$b_1 = \alpha,\quad b_2=\wp(z),\quad b_3=\wp'(z),\quad \text{and}\quad b_4 = g_2.$$ As it was proved in \cite{Buchstaber90}, any formal group law of the form \eqref{the_class_F} has $f_{\Kr}$ as its exponential.  

The Krichever genus $\Kr:\Omega_{\Uu}\to A$, where $A$ is the ring of coefficients of the series $\lambda_1$, $\lambda_2$ from \eqref{the_class_F}, is an isomorphism of graded abelian groups in real dimensions less than $10$ \cite[Corollary 6.11]{BPR10}.

There exists a class \(K\in \Omega_U^{-12}\) on which the Krichever genus tensored by $\Q$ vanishes, while the Buchstaber genus remains nonzero, even for \(a_1=a_2=0\). By \eqref{Theta_2_4_6}, degree \(12\) is minimal for the existence of such a class.

\begin{prop}\label{element_K}
\leavevmode
\begin{enumerate}[\bf (i)]
\item Let $\Theta_n := [\Theta_n]$ denote the class of a smooth theta divisor of complex dimension $n$ in $\Omega_{\Uu}^{-2n}$. Introduce an element $K\in\Omega_{\Uu}^{-12}$$:$
\begin{equation}\label{K_generator}
\begin{aligned}
K={}&\Theta_6
+9\Theta_1^6
-15\Theta_1^4\Theta_2
-3\Theta_1^3\Theta_3
-13\Theta_1^2\Theta_2^2
+3\Theta_1^2\Theta_4 \\
&\quad
+29\Theta_1\Theta_2\Theta_3
+10\Theta_2^3
-11\Theta_2\Theta_4
-10\Theta_3^2.
\end{aligned}
\end{equation}
Then
\[
\Bc(K)=-360a_3\in\Z[a_1,a_2,a_3],
\qquad
\Kr(K)=0\in A\otimes\Q
\cong\Q[\alpha,\wp(z),\wp'(z),g_2],
\]
where $A$ is the ring of coefficients of the series $\lambda_1$, $\lambda_2$ from \eqref{the_class_F}. 

\item The $\Q$-vector space $$\Ker(\Kr:\Omega^{-12}_{\Uu}\otimes\Q\to \Q[\alpha, \wp(z), \wp'(z), g_2])$$ is $2$-dimensional and generated by $K$ and $$L=
\Theta_5\Theta_1
-3\Theta_4\Theta_1^2
-11\Theta_3\Theta_2\Theta_1
+12\Theta_3\Theta_1^3
+22\Theta_2^2\Theta_1^2
-30\Theta_2\Theta_1^4
+9\Theta_1^6.$$
\end{enumerate}
\end{prop}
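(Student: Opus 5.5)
The plan is to compute everything through genera evaluated on the theta divisors. Any genus $\varphi$ is a ring homomorphism. By the Buchstaber--Veselov formula for $f_{\Uu}$, its value on $\Theta_n$ is determined by the exponential: $\varphi(\Theta_n)$ is $(n+1)!$ times the coefficient of $u^{n+1}$ in $f_\varphi(u)$. So for $K$, and for every monomial of degree $-12$ in the $\Theta_k$, it suffices to know the Taylor coefficients of $f_{\Bc}$ and $f_{\Kr}$ up to order $u^7$.

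\textbf{Buchstaber genus.} Since $f_{\Bc}$ is odd, $\Bc(\Theta_1)=\Bc(\Theta_3)=\Bc(\Theta_5)=0$. Hence only the monomials of $K$ built from $\Theta_2,\Theta_4,\Theta_6$ survive, namely $\Theta_6+10\Theta_2^3-11\Theta_2\Theta_4$. Substituting \eqref{Theta_2_4_6} gives
\[
-a_1^3-132a_1a_2-360a_3-10a_1^3+11a_1(a_1^2+12a_2)=-360a_3 .
\]
This settles the first identity directly. For later use in (ii), the same argument gives $\Bc(L)=0$: every monomial of $L$ contains an odd-index factor.

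\textbf{Krichever genus.} Write $f_{\Kr}(u)=e^{\alpha u}/\Phi(u,z)$. We have $1/\Phi(u,z)=u\cdot(\text{series})$, and its Laurent expansion in $u$ can be read off from
\[
\log\bigl(u\,\Phi(u,z)\bigr)=\log\sigma(z-u)-\log\sigma(z)-\log\sigma(u)+\log u+\zeta(z)u .
\]
The first two terms are Taylor expanded using $\zeta(z-u)$ and the derivatives $\wp,\wp',\wp''=6\wp^2-g_2/2,\dots$ at $z$. The term $\log\sigma(u)-\log u$ is expanded from the known series of $\sigma$, with $g_3$ eliminated via $\wp'(z)^2=4\wp(z)^3-g_2\wp(z)-g_3$. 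This gives the coefficients of $f_{\Kr}$ through $u^7$ as explicit polynomials in $\alpha,\wp(z),\wp'(z),g_2$, hence $\Kr(\Theta_1),\dots,\Kr(\Theta_6)$. Substituting into $K$ and $L$ and checking that everything cancels is mechanical but lengthy; it is best done with computer algebra.

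\textbf{Dimension of the kernel (ii).} $\Omega_{\Uu}^{-12}\otimes\Q$ has dimension $p(6)=11$, with basis the monomials in $\Theta_1,\dots,\Theta_6$. The target space $\Q[\alpha,\wp(z),\wp'(z),g_2]$ in degree $-12$ (degrees $2,4,6,8$) has monomial basis
\[
\alpha^6,\ \alpha^4b_2,\ \alpha^3b_3,\ \alpha^2b_2^2,\ \alpha^2b_4,\ \alpha b_2b_3,\ b_2^3,\ b_3^2,\ b_2b_4,
\]
which has dimension $9$. So the kernel has dimension exactly $2$ if and only if $\Kr\otimes\Q$ is surjective in this degree, that is, the $9\times11$ matrix of values has rank $9$. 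I would verify this rank computationally from the same expansions. Alternatively, one can exhibit nine monomials whose images are triangular with respect to a suitable ordering, using powers of $\Theta_1\mapsto 2\alpha$-type leading terms.

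\textbf{Independence of $K$ and $L$.} This is immediate: $\Bc(K)\neq0$ while $\Bc(L)=0$, and $L\neq0$ because it is a nonzero combination of basis monomials. The main obstacle is the bookkeeping in the Krichever expansion up to order $7$, specifically obtaining correct closed forms for $\Kr(\Theta_5)$ and $\Kr(\Theta_6)$. I would cross-check these by specializing, for instance to $\alpha=0,\ \wp'(z)=0$, which recovers the Ochanine/Buchstaber case with $a_3=0$, where $\Kr(K)=0$ must agree with $\Bc(K)|_{a_3=0}=0$.
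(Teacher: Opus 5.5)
Your proposal is correct and follows essentially the paper's route: evaluate both genera on $\Theta_1,\dots,\Theta_6$ through the Taylor coefficients of $f_{\Bc}$ and $f_{\Kr}$ (the paper writes out $\Kr(\Theta_1),\dots,\Kr(\Theta_6)$ explicitly and substitutes them into $K$). For (ii), your dimension count $11-9=2$, with independence of $K$ and $L$ detected by $\Bc$, is a sound way to carry out the ``direct computations'' the paper cites, and the rank-$9$ step needs no matrix computation: $\Kr(\Theta_1),\dots,\Kr(\Theta_4)$ are triangular in $\alpha,\wp(z),\wp'(z),g_2$, so $\Kr\otimes\Q$ is onto the whole polynomial ring.
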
 

\begin{proof}
\begin{enumerate}[\bf (i)]
\item Let $u=\wp(z)$ and $v=\wp'(z)$. From the expansion $f_{\Kr}(u)$, we obtain: $$\begin{aligned}
\Kr(\Theta_1)&=2\alpha,\\
\Kr(\Theta_2)&=3(\alpha^2+u),\\
\Kr(\Theta_3)&=4(\alpha^3+3\alpha u-v),\\
\Kr(\Theta_4)&=5\alpha^4+30\alpha^2u-20\alpha v+45u^2-3g_2,\\
\Kr(\Theta_5)&=6\alpha^5+60\alpha^3u-60\alpha^2v+270\alpha u^2-18\alpha g_2-132uv,\\
\Kr(\Theta_6)&=
7\alpha^{6}
-63\alpha^{2}g_{2}
+105\alpha^{4}u
-99g_{2}u
+945\alpha^{2}u^{2}\\
&+1215u^{3}
-140\alpha^{3}v
-924\alpha uv
+160v^{2}.
\end{aligned}$$ Combining it with the earlier computations \eqref{Theta_2_4_6}, we find: $\Bc(K)=-360a_3$ and $\Kr(K)=90\bigl(4u^3-g_2u-g_3-v^2\bigr)=0$.

\item This part follows from direct computations.

\end{enumerate}
\end{proof}

\section{The Buchstaber Cohomology Theory}\label{Buchstaber_cohomology_theory}

In this section, all generalized cohomology theories are considered on the category of finite CW-complexes. Recall that
\[
R_{\Bc}:=\Z\!\left[\frac12,a_1,a_2,a_3\right],
\qquad \deg a_i=-4i.
\]

\begin{theorem}\label{Buc_spectrum}
There exists a complex-oriented multiplicative cohomology theory $\Buc^{\ast}$ with a commutative and associative product such that
\[
\Buc^{\ast}(\pt)\cong R_{\Bc},
\]
and the coefficient homomorphism induced by the complex orientation,
\[
\Omega_{\Uu}=\Uu^{\ast}(\pt)\longrightarrow \Buc^{\ast}(\pt),
\]
coincides with the Buchstaber genus $\Bc:\Omega_{\Uu}\to R_{\Bc}$.
\end{theorem}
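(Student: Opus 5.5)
We construct $\Buc^\ast$ by killing a regular sequence in $\MU[1/2]$ via Baas--Sullivan bordism with singularities, in the spirit of \cite{Baas73, LRS95, Mironov79}. The first step is purely algebraic. After inverting $2$ we have
\[
\Omega_{\Uu}[1/2]\cong\Z[1/2][u_1,u_2,\dots],
\]
and we claim that the genus $\Bc$ of Corollary~\ref{coroll_Buchstaber_genus} is surjective after inverting $2$. Moreover, we claim that polynomial generators can be chosen so that $\Bc(u_2)$, $\Bc(u_4)$, $\Bc(u_6)$ equal $a_1,a_2,a_3$ up to units of $\Z[1/2]$ and decomposables, while all remaining generators lie in the kernel. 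Granting this, $\Ker\Bc[1/2]$ is generated by the regular sequence
\[
(u_1,u_3,u_5,u_7,u_8,\dots),
\]
with the generators modified by decomposables as needed.

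For the claim, we detect indecomposables through the logarithm. An element of degree $-2n$ is a polynomial generator of $\Omega_{\Uu}[1/2]$ exactly when its Hurwitz/Milnor number $s_n$ is, up to a unit in $\Z[1/2]$, the number $m_n$, where $m_n=p$ if $n+1$ is a power of a prime $p$ and $m_n=1$ otherwise. We compare with $g_{\Bc}$ from Theorem~\ref{Buchstaber_genus_theorem}(iii). Modulo decomposables in $a_1,a_2,a_3$,
\[
g_{\Bc}'(u)\equiv 1+\tfrac12 a_1u^2-\tfrac12 a_2u^4+\tfrac12 a_3u^6+\dots,
\]
so the indecomposable part of $\Bc(\CP^{2k})$ is $\pm\tfrac12 a_k$ for $k=1,2,3$. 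Combined with the computations of $\Bc(\CP^{2k})$ above and the standard relation between $[\CP^n]$ and indecomposables in $\Omega_{\Uu}\otimes\Q$, this lets us find classes $y_k\in\Omega_{\Uu}^{-4k}$ with $\Bc(y_k)=a_k$ modulo decomposables. We then check that these classes are polynomial generators away from $2$: the prime $3$ divides $m_2$, while $m_4=5$ and $m_6=7$. We expect the integrality at $3,5,7$ to be the main obstacle. Rationally the argument is immediate; integrally one must show that $a_1,a_2,a_3$ are hit, and not merely $3a_1$ or similar multiples. The first thing to try is to use the theta divisor values from \eqref{Theta_2_4_6} together with Proposition~\ref{Bc_of_Theta_is_in_Z}, since $\Bc(\Theta_2)=-a_1$ and $\Bc(\Theta_4)=a_1^2+12a_2$. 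If that does not suffice, we fall back on Milnor hypersurfaces $H_{i,j}$, using their $s$-numbers to produce generators with the required images. The remaining generators $u_n$ ($n$ odd, or $n\ge 7$) are then corrected by polynomials in $u_2,u_4,u_6$, possible because $\Bc$ is onto, so that they lie in the kernel.

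Next we kill the sequence $(x_1,x_2,\dots)=(u_1,u_3,u_5,u_7,u_8,\dots)$ one element at a time, each by a Baas--Sullivan singularity or equivalently by a cofibre sequence
\[
\Sigma^{|x_i|}E_{i-1}\xrightarrow{x_i}E_{i-1}\to E_i,
\qquad E_0=\MU[1/2],
\]
and set $\Buc=\mathrm{hocolim}\,E_i$. Regularity gives
\[
\pi_*E_i=\Omega_{\Uu}[1/2]/(x_1,\dots,x_i),
\]
so that $\Buc^*(\pt)\cong\Omega_{\Uu}[1/2]/\Ker\Bc\cong R_{\Bc}$. The map $\MU\to\Buc$ is a complex orientation, and on coefficients it is exactly the quotient map, namely $\Bc$.

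Finally, multiplicativity follows from Mironov's theorem \cite{Mironov79}, equivalently the obstruction theory of \cite{LRS95}. When $2$ is invertible, quotients of $\MU$ by regular sequences admit homotopy-associative and homotopy-commutative products. The obstruction to commutativity for a single quotient $\MU/x$ is a class proportional to $2$, or lying in $\pi_*/(x)$ in the relevant degree, and inverting $2$ kills it. This yields the commutative and associative product on $\Buc^\ast$, compatible with the orientation.
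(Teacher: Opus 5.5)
Your proof is correct. It follows the paper's Baas--Sullivan strategy, but you run it in $\MU[1/2]$ rather than in oriented bordism.

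The paper's route:
\begin{itemize}
\item It uses the oddness of $f_{\Bc}$ to factor $\Bc$ through $\Omega_{\SO}[1/2]=\Z[1/2][x_4,x_8,\dots]$.
\item It takes $x_4=2[\CP^2]$, $x_8=-2[\CP^4]+3[\CP^2]^2$ and $x_{12}=2[\CP^6]-6[\CP^2][\CP^4]+4[\CP^2]^3$, which map exactly to $a_1,a_2,a_3$.
\item It kills only the corrected generators $y_{16},y_{20},\dots$ by oriented bordism with singularities, rescaling by powers of $2$ to get closed manifold representatives.
\item It obtains the complex orientation from $\MU\to\mathrm{MSO}[1/2]\to\Buc$.
\end{itemize}

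Your route:
\begin{itemize}
\item You must also kill the odd-index generators $u_1,u_3,u_5,\dots$. This is harmless, since they map to zero for degree reasons.
\item In exchange you need only Corollary~\ref{coroll_Buchstaber_genus}. The paper additionally needs the oriented genus to send $\Omega_{\SO}[1/2]$ into $R_{\Bc}$; this holds because $\Omega_{\Uu}[1/2]\to\Omega_{\SO}[1/2]$ is onto, but the paper does not say so.
\item The cofibre construction in $\MU[1/2]$-modules needs no manifold representatives.
\item The product theory for quotients of an even ring spectrum by a regular sequence applies verbatim and makes $\MU\to\Buc$ a ring map, so the orientation is automatic.
\end{itemize}

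Your description of the commutativity obstruction is loose. What actually happens is that changing the product by a class $a$ changes the obstruction by $\pm 2a$, which is why inverting $2$ suffices. Your conclusion is nevertheless right. The paper's version kills fewer classes and produces $\Buc$ as a ring spectrum under $\mathrm{MSO}[1/2]$, which reflects that $\Bc$ is really a genus of oriented manifolds.

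The integrality issue at $3,5,7$ that you flag as the main obstacle is not one, and your own data already settle it. Since $s_{2k}(\CP^{2k})=2k+1$ equals $m_{2k}$ for $k=1,2,3$, the classes $[\CP^2],[\CP^4],[\CP^6]$ are integral polynomial generators of $\Omega_{\Uu}$. Combined with your indecomposable parts $\pm\frac12 a_k$, this shows that the three combinations above are generators of $\Omega_{\Uu}[1/2]$ whose $\Bc$-images are exactly $a_1,a_2,a_3$. These are precisely the paper's $x_4,x_8,x_{12}$, now read in $\Omega_{\Uu}$.

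The theta-divisor approach you would try first would in fact fail. By Theorem~\ref{Buchstaber_exponential_Hurwitz_theorem}, the values $\Bc(\Theta_n)$ generate only $\Z[a_1,12a_2,360a_3]$, which still misses $a_2$ and $a_3$ after inverting $2$. Equivalently, $s_4(\Theta_4)=-5!$ is divisible by $3$, so $\Theta_4$ is not a generator of $\Omega_{\Uu}[1/2]$.
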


\begin{proof}
We use the Baas--Sullivan construction in oriented bordism after inverting $2$, following the method of Landweber, Ravenel, and Stong \cite{Baas73,LRS95}. The logarithm $g_{\Bc}$ and the exponential $f_{\Bc}$ are odd. Hence the characteristic series
\[
Q_{\Bc}(u)=\frac{u}{f_{\Bc}(u)}
\]
is even, and the Buchstaber genus factors through oriented bordism:
\[
\Omega_{\Uu}\longrightarrow
\Omega_{\SO}\!\left[\frac12\right]
\xrightarrow{\,\varphi_{\Bc}\,}
R_{\Bc}.
\]

Recall that \cite[Section~3]{LRS95}
\[
\Omega_{\SO}\!\left[\frac12\right]
\cong
\Z\!\left[\frac12\right][x_4,x_8,x_{12},x_{16},\ldots].
\]
Using the values of $\Bc$ on complex projective spaces computed above, choose the first three polynomial generators to be
\[
\begin{aligned}
x_4&=2[\CP^2],\\
x_8&=-2[\CP^4]+3[\CP^2]^2,\\
x_{12}&=2[\CP^6]-6[\CP^2][\CP^4]+4[\CP^2]^3.
\end{aligned}
\]
We have
\[
\begin{aligned}
\varphi_{\Bc}(x_4)
&=2\cdot\frac{a_1}{2}=a_1,\\
\varphi_{\Bc}(x_8)
&=-2\cdot\frac{3a_1^2-4a_2}{8}
  +3\left(\frac{a_1}{2}\right)^2=a_2,\\
\varphi_{\Bc}(x_{12})
&=2\cdot\frac{5a_1^3-12a_1a_2+8a_3}{16}
 -6\cdot\frac{a_1}{2}\cdot\frac{3a_1^2-4a_2}{8}
 +4\left(\frac{a_1}{2}\right)^3=a_3.
\end{aligned}
\]
Extend these elements to a set of polynomial generators $x_4,x_8,x_{12},x_{16},\ldots$. For every $n>3$, let $P_{4n}(\alpha,\beta,\gamma)$ be the homogeneous polynomial satisfying
\[
\varphi_{\Bc}(x_{4n})=P_{4n}(a_1,a_2,a_3),
\]
and put
\[
y_{4n}=x_{4n}-P_{4n}(x_4,x_8,x_{12}).
\]
The triangular change of generators gives
\[
\Omega_{\SO}\!\left[\frac12\right]
\cong
\Z\!\left[\frac12\right]
[x_4,x_8,x_{12},y_{16},y_{20},\ldots],
\]
and $\Sigma=\{y_{16},y_{20},\ldots\}$ is a regular sequence. Multiplying its elements by suitable powers of $2$, if necessary, does not change the generated ideal and allows us to choose closed oriented representatives for the singularity set.

The Baas--Sullivan theory of oriented bordism with singularities $\Sigma$ therefore has coefficient ring
\[
\Buc^{\ast}(\pt)
:=
\SO_{\Sigma}^{\ast}(\pt)\!\left[\frac12\right]
\cong
\frac{\Omega_{\SO}[1/2]}{(y_{16},y_{20},\ldots)}
\cong
R_{\Bc}.
\]
Since $2$ is invertible, Mironov's obstruction theory gives a commutative and associative multiplication on this theory \cite{Mironov79}. Finally, the composite of ring spectra
\[
\MU\longrightarrow \mathrm{MSO}\!\left[\frac12\right]
\longrightarrow \Buc
\]
provides a complex orientation. By construction, its homomorphism on coefficients is $\varphi_{\Bc}$ composed with the forgetful map, hence it is precisely the Buchstaber genus.
\end{proof}

Put
\begin{equation}\label{Buchstaber_discriminant}
\begin{aligned}
\Delta
&=\disc_t\bigl(t^3-a_1t^2+a_2t-a_3\bigr)\\
&=a_1^2a_2^2-4a_1^3a_3-4a_2^3+18a_1a_2a_3-27a_3^2.
\end{aligned}
\end{equation}

\begin{theorem}\label{Buchstaber_spectrum_is_Landweber_exact}
The functor
\[
\Uu^{\ast}(-)\otimes_{\Omega_{\Uu}}
R_{\Bc}[\Delta^{-1}]
\]
is a complex-oriented multiplicative cohomology theory. Moreover, the natural transformation
\[
\Uu^{\ast}(-)\otimes_{\Omega_{\Uu}}
R_{\Bc}[\Delta^{-1}]
\longrightarrow
\Buc^{\ast}(-)[\Delta^{-1}]
\]
is an isomorphism.
\end{theorem}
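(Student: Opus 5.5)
The plan has two parts: show that the functor is Landweber exact, and then compare it with $\Buc^{\ast}[\Delta^{-1}]$.

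\textbf{Landweber exactness.} We apply Landweber's exact functor theorem to the $\Omega_{\Uu}$-module $R_{\Bc}[\Delta^{-1}]$, made into a module via the Buchstaber genus of Corollary \ref{coroll_Buchstaber_genus}.

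At $p=2$ there is nothing to check, since $2$ is invertible. For an odd prime $p$ we must show that $v_0=p,v_1,v_2,\ldots$ forms a regular sequence and that $v_n$ is a unit modulo $(p,\ldots,v_{n-1})$ for $n\ge 3$. To do this, pass to Weierstrass form.

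Theorem \ref{Buchstaber_genus_theorem}(iii) gives $f_{\Bc}(u)^{-2}=\wpp(u)+a_1/3$. This identifies the formal group law $F_{\Bc}$ with the formal group of the elliptic curve
\[
w^2=4s^3-g_2s+g_3,
\]
in the parameter $u$ with $s=1/u^2-a_1/3$. To avoid denominators of $3$, it is better to use the integral model $y^2=t^3-a_1t^2+a_2t-a_3$ (up to the sign conventions of $Q$), with $t=1/u^2$. Its cubic has discriminant $\Delta$, so over $R_{\Bc}[\Delta^{-1}]$ the curve is a genuine elliptic curve: it has good reduction wherever $\Delta$ is inverted.

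The standard argument, as in Landweber--Ravenel--Stong for the Jacobi quartic, then applies.
\begin{itemize}
\item $p$ is not a zero divisor, since $R_{\Bc}[\Delta^{-1}]$ is a localization of a polynomial ring over $\Z[1/2]$.
\item $v_1$ is, up to a unit, the Hasse invariant modulo $p$. It is a nonzero polynomial in $\mathbb F_p[a_1,a_2,a_3][\Delta^{-1}]$, which is a domain, so $v_1$ is regular.
\item Modulo $(p,v_1)$ the curve is supersingular. The formal group of an elliptic curve has height at most $2$, so $v_2$ is a unit there. Hence the ideal $(p,v_1,v_2)$ is the whole ring, and the remaining conditions hold vacuously.
\end{itemize}
The expected main obstacle lies in this step. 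We must justify that the $p$-typical $v_1,v_2$ of $F_{\Bc}$ agree up to units with the Hasse invariant. We must also show that height $\le 2$ holds over the whole ring, not only at geometric points. The plan is to argue pointwise: after reducing at each prime of $R_{\Bc}[\Delta^{-1}]/(p,v_1)$, $v_2$ is nonzero at every residue field because the curve is elliptic there, so it is a unit.

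Once exactness is established, Landweber's theorem gives a homology theory. Its cohomology version on finite complexes is complex oriented and multiplicative, because $R_{\Bc}[\Delta^{-1}]$ is an $\Omega_{\Uu}$-algebra.

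\textbf{Comparison.} The orientation $\MU\to\Buc$ from Theorem \ref{Buc_spectrum} induces a natural map
\[
\Uu^{\ast}(X)\otimes_{\Omega_{\Uu}}R_{\Bc}\to\Buc^{\ast}(X),\qquad x\otimes r\mapsto \bar x\cdot r,
\]
which is multiplicative. Localizing at $\Delta$ gives the transformation in the statement. Localization is exact, so $\Buc^{\ast}(-)[\Delta^{-1}]$ is again a cohomology theory.

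The source is a cohomology theory by the first part. Both sides satisfy exactness, suspension and finite additivity, so a five-lemma induction over the cells of a finite CW-complex reduces the claim to $X=\pt$. There the map is
\[
\Omega_{\Uu}\otimes_{\Omega_{\Uu}}R_{\Bc}[\Delta^{-1}]\to R_{\Bc}[\Delta^{-1}].
\]
By Theorem \ref{Buc_spectrum} the coefficient homomorphism is exactly $\Bc$, so this map is the identity. This completes the comparison.
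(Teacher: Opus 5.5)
Your proposal is correct and follows essentially the same route as the paper: identify $F_{\Bc}$ with the formal group at infinity of the cubic $Y^2=4(T^3-a_1T^2+a_2T-a_3)$, $T=f_{\Bc}^{-2}$. Then check Landweber's criterion at odd $p$ in three steps: $p$ is regular; $v_1$ is nonzero in the domain $R_{\Bc}[\Delta^{-1}]/(p)$ because the family contains ordinary curves; and $v_2$ is invertible modulo $(p,v_1)$ by the height-$\le 2$ argument at residue fields. Finally, compare with $\Buc^{\ast}[\Delta^{-1}]$ by reducing to coefficients on finite CW-complexes. You flag identifying $v_1$ with the Hasse invariant as the main obstacle, but that identification is not needed: as in the paper, it suffices that $v_1\not\equiv 0 \pmod p$, which your pointwise argument at an ordinary fiber already gives.
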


\begin{proof}
Let $f=f_{\Bc}$. From $f'^2=1-a_1f^2+a_2f^4-a_3f^6$, set
\[
T=f^{-2},\qquad Y=\frac{dT}{du}=-2f^{-3}f'.
\]
Then
\[
Y^2=4\bigl(T^3-a_1T^2+a_2T-a_3\bigr),
\qquad
\frac{dT}{Y}=du.
\]
Thus $F_{\Bc}$ is the formal group at infinity of this cubic curve, equipped with its invariant differential. After inverting $\Delta$, the curve is a smooth elliptic curve over $R_{\Bc}[\Delta^{-1}]$.

We apply the Landweber exact functor criterion in the form used in \cite[Section~4.5]{LRS95}. The prime $2$ is already invertible. Let $p$ be odd, and write the $p$-series of the formal group as
\[
[p](z)=pz+\cdots+v_1z^p+\cdots+v_2z^{p^2}+\cdots.
\]
Multiplication by $p$ is injective on $R_{\Bc}[\Delta^{-1}]$. Modulo $p$, the ring is a domain and $v_1$ is not the zero element, because the family contains ordinary elliptic curves; hence $v_1$ is a non-zero-divisor. If $v_2$ were not a unit modulo $(p,v_1)$, some residue field would give a smooth elliptic curve whose formal group has $v_1=v_2=0$, and therefore height at least $3$. This is impossible: the formal group of an elliptic curve in positive characteristic has height $1$ or $2$. Thus $v_2$ is a unit modulo $(p,v_1)$, and the Landweber criterion is satisfied for every prime.

Consequently the extension-of-scalars functor in the statement is a cohomology theory. The orientation of $\Buc$ gives the displayed natural transformation; it is an isomorphism on coefficient rings. The comparison theorem for cohomology theories on finite CW-complexes now implies that it is an isomorphism on every finite CW-complex, exactly as in \cite[Section~4.7]{LRS95}.
\end{proof}

\begin{remark}
The localization at $\Delta$ is essential for the Landweber-exact construction. The unlocalized theory $\Buc^{\ast}$ is instead supplied by bordism with singularities.
\end{remark}

\section{Hurwitz Integrality and Bunkova's Conjecture}\label{Bunkova_conjecture}

Recall that the Witten genus \cite{Witten88} corresponds to a formal group law with the exponential $f_{\Wt}(u)=\sigma(u;g_2,g_3)$ given by the Weierstrass $\sigma$-function. For the specialization $a_1=0$, first compare the Buchstaber
\[
f_{\Bc}(u)=\frac{1}{\sqrt{\wp(u;g_2,-g_3)}}
\]
and Witten $f_{\Wt}(u)=\sigma(u;g_2,g_3)$ exponentials:
{\small
\begin{equation}\label{f_Bc_and_f_Wt}
\begin{alignedat}{7}
f_{\Bc}(u)&=u
&{}-3g_2\frac{u^5}{5!}&{}
&{}+90g_3\frac{u^7}{7!}&{}
&{}+189g_2^2\frac{u^9}{9!}&{}
&{}-43\,740g_2g_3\frac{u^{11}}{11!}&{}
&{}+\bigl(-68\,607g_2^3+2\,673\,000g_3^2\bigr)
       \frac{u^{13}}{13!}&{}
&{}+O(u^{15}),&{}\\[0.4ex]
f_{\Wt}(u)&=u
&{}-\widetilde g_2\frac{u^5}{5!}&{}
&{}-\widetilde g_3\frac{u^7}{7!}&{}
&{}-9\widetilde g_2^2\frac{u^9}{9!}&{}
&{}-6\widetilde g_2\widetilde g_3\frac{u^{11}}{11!}&{}
&{}+\bigl(69\widetilde g_2^3-6\widetilde g_3^2\bigr)
       \frac{u^{13}}{13!}&{}
&{}+O(u^{15}),&{}
\end{alignedat}
\end{equation}
}where $\widetilde g_2=g_2/2$ and $\widetilde g_3=6g_3$.

Recall that a formal power series
\[
\sum_{n\geq0}c_n\frac{u^{n}}{n!},
\qquad c_n\in R,
\]
is called a Hurwitz series over a torsion-free commutative ring $R$. Put
\begin{equation}\label{Hurwitz_series_definition}
\mathcal H_R[[u]]
=
\left\{\left.\sum_{n\geq0}c_n\frac{u^n}{n!}\in (R\otimes_{\mathbb Z}\mathbb Q)[[u]]\ \right|\ c_n\in R\right\}.
\end{equation}

In \cite{Bunkova17}, Bunkova formulated a coefficient-divisibility conjecture giving exact $2$-adic and $3$-adic valuation formulas for the normalized coefficients of the sigma-function. One consequence of this conjecture is the Hurwitz-integrality statement
\[
\sigma(u;g_2,g_3)\in\mathcal H_{\Z[g_2/2,6g_3]}[[u]].
\]
Corollary~\ref{Bunkova_Hurwitz_prediction} below proves this consequence, but not the stronger valuation formulas themselves. We first determine the minimal Hurwitz coefficient ring of the full three-parameter Buchstaber exponential and then specialize to $a_1=0$.

\begin{lemma}\label{Hurwitz_divisibility_lemma}
Let $R$ be a torsion-free commutative ring and let
\[
h(u)=\sum_{m\geq 1}h_m\frac{u^m}{m!}
\in\mathcal H_R[[u]]
\]
be a Hurwitz series with vanishing constant term. Then, for every $k\geq 0$,
\[
\frac{h(u)^k}{k!}\in\mathcal H_R[[u]].
\]
More generally, if
\[
\psi(t)=\sum_{k\geq 0}c_k\frac{t^k}{k!}
\in\mathcal H_R[[t]],
\]
then
\[
\psi(h(u))\in\mathcal H_R[[u]].
\]
The divisor $k!$ is optimal uniformly over all torsion-free commutative rings $R$ and all such $h$.
\end{lemma}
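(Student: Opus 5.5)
The plan is to prove the first claim by a combinatorial formula for the Hurwitz coefficients of $h^k/k!$. Write $h(u)=\sum_{m\ge1}h_m u^m/m!$. Expanding the $k$-th power and collecting the coefficient of $u^n/n!$ gives
\[
\frac{h(u)^k}{k!}=\sum_{n\ge k}\frac{u^n}{n!}\,\frac{1}{k!}\sum_{\substack{m_1+\cdots+m_k=n\\ m_i\ge1}}\binom{n}{m_1,\dots,m_k}h_{m_1}\cdots h_{m_k}.
\]
The factor $1/k!$ must be absorbed. The inner sum is over ordered compositions of $n$ into $k$ positive parts. The symmetric group $S_k$ acts on these compositions by permuting parts, and both the multinomial coefficient and the product $h_{m_1}\cdots h_{m_k}$ are invariant under this action.

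Rather than counting orbits, I will use the standard identification with set partitions. Namely,
\[
\frac1{k!}\sum_{(m_i)}\binom{n}{m_1,\dots,m_k}h_{m_1}\cdots h_{m_k}=\sum_{\pi}\prod_{B\in\pi}h_{|B|},
\]
where $\pi$ runs over set partitions of $\{1,\dots,n\}$ into exactly $k$ nonempty blocks. To see this, note that $\binom{n}{m_1,\dots,m_k}$ counts ordered $k$-tuples of disjoint nonempty blocks with prescribed sizes. Since the blocks are nonempty, they are distinct, so each unordered partition arises from exactly $k!$ orderings. This is the exponential formula (Fa\`a di Bruno / Bell polynomial). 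The coefficient is therefore a sum of products of the $h_m$ with integer multiplicities, hence lies in $R$, and $h^k/k!\in\mathcal H_R[[u]]$. Torsion-freeness is used only to make sense of the series inside $(R\otimes\Q)[[u]]$.

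For the composition claim, write
\[
\psi(h(u))=\sum_k c_k\,\frac{h(u)^k}{k!}.
\]
Because $h$ has no constant term, $h^k/k!=O(u^k)$, so only finitely many terms contribute to each coefficient and the sum converges $u$-adically. Each term is an $R$-multiple of a Hurwitz series by the first part, and $\mathcal H_R[[u]]$ is closed under $R$-linear combinations and $u$-adically convergent sums. This gives $\psi(h(u))\in\mathcal H_R[[u]]$.

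For optimality, take $R=\Z$ and $h(u)=u$, whose only nonzero Hurwitz coefficient is $h_1=1$. Then
\[
\frac{h^k}{d}=\frac{k!}{d}\cdot\frac{u^k}{k!},
\]
which is Hurwitz over $\Z$ if and only if $d\mid k!$. So no divisor larger than $k!$ can work uniformly.

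The only step needing care is the orbit-counting identification, specifically the freeness of the $S_k$-action on ordered set partitions. This holds because the blocks are nonempty and disjoint, hence pairwise distinct. Everything else is bookkeeping.
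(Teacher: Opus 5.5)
Your proposal is correct and follows the paper's proof. The Hurwitz coefficients of $h^k/k!$ are the exponential partial Bell polynomials $B_{n,k}(h_1,\ldots,h_{n-k+1})$, the composition statement follows by summing $c_kB_{n,k}$ degree by degree, and optimality comes from $R=\mathbb{Z}$, $h(u)=u$. The only difference is that you prove the integrality of $B_{n,k}$ yourself, via the free $S_k$-action on ordered set partitions, whereas the paper cites Comtet for it.
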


\begin{proof}
The exponential partial Bell polynomials satisfy
\[
\frac{1}{k!}
\left(
  \sum_{m\geq 1}x_m\frac{u^m}{m!}
\right)^k
=
\sum_{n\geq k}
B_{n,k}(x_1,\ldots,x_{n-k+1})
\frac{u^n}{n!},
\]
where
\[
B_{n,k}\in
\mathbb Z[x_1,\ldots,x_{n-k+1}]
\]
is the $(n,k)$-th partial Bell polynomial; see
\cite[Chapter~III, \S3.3]{Comtet74}. Substituting $x_m=h_m$
gives
\[
\frac{h(u)^k}{k!}
=
\sum_{n\geq k}
B_{n,k}(h_1,\ldots,h_{n-k+1})
\frac{u^n}{n!},
\]
and therefore $h^k/k!\in\mathcal H_R[[u]]$.

More generally,
\[
\begin{aligned}
\psi(h(u))
&=
\sum_{k\geq 0}c_k\frac{h(u)^k}{k!} \\
&=
\sum_{n\geq 0}
\left(
  \sum_{k=0}^{n}
  c_k B_{n,k}(h_1,\ldots,h_{n-k+1})
\right)
\frac{u^n}{n!}.
\end{aligned}
\]
Every coefficient in parentheses belongs to $R$, proving the
composition statement.

Finally, take $R=\mathbb Z$ and $h(u)=u$. Then
\[
h(u)^k=u^k=k!\frac{u^k}{k!}.
\]
Thus any positive integer that divides $h^k$ in the Hurwitz ring for
every such $h$ must divide $k!$. Hence $k!$ is the largest possible
universal divisor, uniformly over $R$ and $h$.
\end{proof}

\begin{theorem}\label{Buchstaber_exponential_Hurwitz_theorem}
For algebraically independent parameters $a_1,a_2,a_3$, the exponential
of the Buchstaber genus satisfies
\[
f_{\mathrm{Bc}}(u)
\in
\mathcal H_{\mathbb Z[a_1,\,12a_2,\,360a_3]}[[u]].
\]
This is the minimal Hurwitz coefficient ring of $f_{\mathrm{Bc}}$
among subrings of $\mathbb Q[a_1,a_2,a_3]$.
\end{theorem}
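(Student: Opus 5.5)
The plan is to prove the inclusion by induction on the Hurwitz coefficients, using the second-order differential equation \eqref{f_two_primes} together with Lemma~\ref{Hurwitz_divisibility_lemma}. Minimality will then follow from the explicit low-order values \eqref{Theta_2_4_6}.

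Put $R:=\Z[a_1,12a_2,360a_3]\subset\Q[a_1,a_2,a_3]$. This ring is torsion-free. Write $f:=f_{\Bc}=\sum_{n\ge1}c_n u^n/n!$, so that $c_1=1$ and $c_n=\Bc(\Theta_{n-1})$. The key observation is that the coefficients in \eqref{f_two_primes} can be rewritten with factorial denominators:
\[
f''=-a_1 f+(12a_2)\,\frac{f^3}{3!}-(360a_3)\,\frac{f^5}{5!}.
\]
This uses $2\cdot 3!=12$ and $3\cdot 5!=360$. The three constants $a_1$, $12a_2$, $360a_3$ lie in $R$. Comparing the coefficients of $u^n/n!$ on both sides gives
\[
c_{n+2}=-a_1c_n+12a_2\,[f^3/3!]_n-360a_3\,[f^5/5!]_n,
\]
where $[\,\cdot\,]_n$ denotes the $n$-th Hurwitz coefficient.

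I will show by strong induction on $N$ that $c_1,\dots,c_N\in R$. The base cases are $c_1=1$ and $c_2=0$. For the inductive step, assume $c_m\in R$ for all $m\le n+1$, and let $h:=\sum_{m\le n}c_m u^m/n!$ be the truncated series (more precisely $h=\sum_{m\le n}c_m u^m/m!$). Then $h\in\mathcal H_R[[u]]$ and $h$ has vanishing constant term. By Lemma~\ref{Hurwitz_divisibility_lemma}, both $h^3/3!$ and $h^5/5!$ belong to $\mathcal H_R[[u]]$.

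Because $f$ has no constant term, the $u^n$-coefficient of $f^k$ for $k\ge 3$ involves only the $c_m$ with $m\le n-k+1\le n-2$. Hence $[f^3/3!]_n=[h^3/3!]_n\in R$ and $[f^5/5!]_n=[h^5/5!]_n\in R$. The recursion above then gives $c_{n+2}\in R$, which completes the induction. This truncation bookkeeping is the only delicate point of the argument. In particular, one must use the sharp divisor $k!$ from the lemma and not merely integrality of $h^k$: the latter would only put the coefficients in $\Z[a_1,2a_2,3a_3]$ divided by factorials, which is not enough.

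For minimality, let $S\subset\Q[a_1,a_2,a_3]$ be any subring with $f\in\mathcal H_S[[u]]$. Then $S$ contains all the $c_n$. By \eqref{Theta_2_4_6} this gives three elements of $S$:
\begin{itemize}
\item $c_3=-a_1$, so $a_1\in S$;
\item $c_5=a_1^2+12a_2$, so $12a_2\in S$;
\item $c_7=-a_1^3-11a_1(12a_2)-360a_3$, so $360a_3\in S$.
\end{itemize}
Therefore $S\supseteq R$. Combined with the first part, $R$ is exactly the subring generated by the Hurwitz coefficients of $f_{\Bc}$, and hence it is the minimal Hurwitz coefficient ring. Algebraic independence of $a_1,a_2,a_3$ is used only to ensure that $R$ is the polynomial ring in the stated generators, so that the description of the minimal ring is as claimed.
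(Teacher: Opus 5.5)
Your proposal is correct and follows essentially the same route as the paper's proof. Both use the same divided-power rewriting $f''=-a_1f+(12a_2)f^3/3!-(360a_3)f^5/5!$, the same coefficient recursion and induction, and the same minimality argument via $c_3$, $c_5$, $c_7$; the paper writes the Bell polynomials $B_{n,3}$, $B_{n,5}$ explicitly, whereas you apply Lemma~\ref{Hurwitz_divisibility_lemma} to a truncation, which amounts to the same thing.
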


\begin{proof}
Put
\[
R_H:=\Z[a_1,12a_2,360a_3]
\]
and write $f=f_{\mathrm{Bc}}$. By \eqref{f_prime_series},
\[
(f')^2=1-a_1f^2+a_2f^4-a_3f^6,
\qquad f(0)=0,\quad f'(0)=1.
\]
Since $f'$ is a unit, differentiation gives
$f''=-a_1f+2a_2f^3-3a_3f^5$, as in \eqref{f_two_primes}.
In divided-power form, this becomes
\begin{equation}\label{eq:Bc_exp_Bell}
f''
=
-a_1f
+
(12a_2)\frac{f^3}{3!}
-
(360a_3)\frac{f^5}{5!}.
\end{equation}

Write
\[
f(u)=\sum_{n\geq0}c_n\frac{u^n}{n!},
\qquad c_0=0,\quad c_1=1.
\]
We use the exponential partial Bell polynomials from the proof of
Lemma~\ref{Hurwitz_divisibility_lemma}, with the convention $B_{n,k}=0$
for $n<k$. Comparing the $n$-th Hurwitz coefficients in
\eqref{eq:Bc_exp_Bell} gives
\[
\begin{aligned}
c_{n+2}
&=-a_1c_n
  +12a_2 B_{n,3}(c_1,\ldots,c_{n-2})\\
&\quad-360a_3 B_{n,5}(c_1,\ldots,c_{n-4}),
\qquad n\geq0.
\end{aligned}
\]
In particular, $c_2=0$. Each Bell polynomial has integer coefficients,
and the right-hand side involves only $c_j$ with $j\leq n$.
Starting with $c_0,c_1\in R_H$, induction therefore gives
$c_n\in R_H$ for every $n\geq0$. Thus
$f_{\mathrm{Bc}}\in\mathcal H_{R_H}[[u]]$.

To prove minimality, the first relevant coefficients are
\[
\begin{aligned}
c_3&=-a_1,\\
c_5&=a_1^2+12a_2,\\
c_7&=-a_1^3-132a_1a_2-360a_3,
\end{aligned}
\]
in agreement with the theta-divisor values in \eqref{Theta_2_4_6}.
Every Hurwitz coefficient ring for $f_{\mathrm{Bc}}$ must contain
$c_3,c_5,c_7$, and hence also
\[
\begin{aligned}
a_1&=-c_3,\\
12a_2&=c_5-a_1^2,\\
360a_3&=-c_7-a_1^3-11a_1(12a_2).
\end{aligned}
\]
Consequently it contains $R_H$, which proves the asserted minimality.
\end{proof}

By the theta-divisor expansion of $f_{\Bc}$ in
Section~\ref{The_Buchstaber_Genus}, the theorem also identifies
\[
\Z\bigl[\Bc(\Theta_n)\mid n\geq1\bigr]
=
\Z[a_1,12a_2,360a_3],
\]
refining Proposition~\ref{Bc_of_Theta_is_in_Z}.
This is the Hurwitz coefficient ring of $f_{\Bc}$, not the coefficient
ring $R_{\Bc}$ of the genus on all of $\Omega_{\Uu}$.

\begin{coroll}\label{Buchstaber_exponential_Hurwitz_specialization}
For $a_1=0$, the exponential of the Buchstaber genus satisfies
\[
f_{\mathrm{Bc}}(u)
=
\frac{1}{\sqrt{\wp(u;g_2,-g_3)}}
\in
\mathcal H_{\mathbb Z[3g_2,\,90g_3]}[[u]].
\]
Moreover, $\mathbb Z[3g_2,90g_3]$ is its minimal Hurwitz coefficient
ring among subrings of $\mathbb Q[g_2,g_3]$.
\end{coroll}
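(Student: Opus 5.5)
The plan is to obtain the corollary by specializing Theorem~\ref{Buchstaber_exponential_Hurwitz_theorem} along $a_1=0$ and translating the parameters $a_2,a_3$ into Weierstrass invariants.

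\emph{Step 1: the closed form and the change of parameters.} Put $a_1=0$ in \eqref{Integral_I_formula}. This gives
\[
g_2=-4a_2,\qquad g_3=-4a_3,\qquad\text{that is,}\qquad a_2=-\tfrac{g_2}{4},\quad a_3=-\tfrac{g_3}{4}.
\]
The closed form $f_{\Bc}(u)=1/\sqrt{\wp(u;g_2,-g_3)}$ is then Theorem~\ref{Buchstaber_genus_theorem}(iii) with the term $a_1/3$ removed. The substitution $a_2\mapsto -g_2/4$, $a_3\mapsto -g_3/4$ is an isomorphism $\Q[a_2,a_3]\cong\Q[g_2,g_3]$. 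Hence $g_2,g_3$ are algebraically independent exactly when $a_2,a_3$ are, and subrings of $\Q[g_2,g_3]$ correspond bijectively to subrings of $\Q[a_2,a_3]$.

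\emph{Step 2: membership.} The evaluation $a_1\mapsto 0$ is a ring homomorphism $\Q[a_1,a_2,a_3]\to\Q[a_2,a_3]$. It sends $R_H=\Z[a_1,12a_2,360a_3]$ onto $\Z[12a_2,360a_3]$. Applied coefficientwise to the Hurwitz coefficients $c_n\in R_H$ from Theorem~\ref{Buchstaber_exponential_Hurwitz_theorem}, it yields the Hurwitz coefficients of the specialized exponential. Under Step~1 we have $12a_2=-3g_2$ and $360a_3=-90g_3$, so $\Z[12a_2,360a_3]=\Z[3g_2,90g_3]$. This proves $f_{\Bc}(u)\in\mathcal H_{\Z[3g_2,90g_3]}[[u]]$.

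As an alternative that avoids specialization, one can run the recursion in the proof of Theorem~\ref{Buchstaber_exponential_Hurwitz_theorem} directly with $a_1=0$. It reads
\[
c_{n+2}=-3g_2\,B_{n,3}+90g_3\,B_{n,5},
\]
and induction on $n$ gives the same membership.

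\emph{Step 3: minimality.} With $a_1=0$ the listed coefficients become
\[
c_5=12a_2=-3g_2,\qquad c_7=-360a_3=90g_3,
\]
which agrees with the expansion \eqref{f_Bc_and_f_Wt}. Any subring of $\Q[g_2,g_3]$ over which $f_{\Bc}$ is Hurwitz must contain $c_5$ and $c_7$. It therefore contains $3g_2$ and $90g_3$, and hence all of $\Z[3g_2,90g_3]$.

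None of these steps is a real obstacle. The only place that needs care is the sign and normalization bookkeeping between $(a_2,a_3)$ and $(g_2,g_3)$, including the sign flip $g_3\mapsto -g_3$ in the argument of $\wp$. I would check it against the first few terms of \eqref{f_Bc_and_f_Wt}.
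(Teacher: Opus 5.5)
Your proposal is correct and matches the paper's proof. Like the paper, you set $a_1=0$ in \eqref{Integral_I_formula} to get $12a_2=-3g_2$ and $360a_3=-90g_3$, take integrality from Theorem~\ref{Buchstaber_exponential_Hurwitz_theorem} and the closed form from Theorem~\ref{Buchstaber_genus_theorem}(iii), and read off minimality from $c_5=-3g_2$ and $c_7=90g_3$; your direct recursion $c_{n+2}=-3g_2B_{n,3}+90g_3B_{n,5}$ is a valid but unnecessary alternative.
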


\begin{proof}
At $a_1=0$, the parameter relations \eqref{Integral_I_formula} give
\[
a_2=-\frac{g_2}{4},\qquad a_3=-\frac{g_3}{4}.
\]
Thus $12a_2=-3g_2$ and $360a_3=-90g_3$, so the specialization of
$\Z[a_1,12a_2,360a_3]$ is $\Z[3g_2,90g_3]$.
The integrality assertion follows from
Theorem~\ref{Buchstaber_exponential_Hurwitz_theorem}, and the
Weierstrass expression for $f_{\Bc}$ follows from
Theorem~\ref{Buchstaber_genus_theorem}{\rm (iii)}.
Finally, the expansion \eqref{f_Bc_and_f_Wt} begins with
\[
f_{\Bc}(u)
=
u-3g_2\frac{u^5}{5!}+90g_3\frac{u^7}{7!}+O(u^9).
\]
Hence every Hurwitz coefficient ring contains $3g_2$ and $90g_3$,
proving minimality also after specialization.
\end{proof}

\begin{coroll}[(Bunkova's Hurwitz-integrality conjecture)]\label{Bunkova_Hurwitz_prediction}
The Weierstrass sigma-function satisfies
\[
\sigma(u;g_2,g_3)\in\mathcal H_{\Z[g_2/2,6g_3]}[[u]],
\]
and $\Z[g_2/2,6g_3]$ is its minimal Hurwitz coefficient ring among subrings of $\Q[g_2,g_3]$.
\end{coroll}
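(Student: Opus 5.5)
The plan is to get integrality from Weierstrass's own recursion for the normalized coefficients of $\sigma$, strengthened by a $3$-adic induction. Minimality is immediate from the expansion \eqref{f_Bc_and_f_Wt}.

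\emph{Minimality.} The Witten row of \eqref{f_Bc_and_f_Wt} reads
\[
\sigma(u;g_2,g_3)=u-\widetilde g_2\frac{u^5}{5!}-\widetilde g_3\frac{u^7}{7!}+O(u^9),
\qquad \widetilde g_2=g_2/2,\ \widetilde g_3=6g_3 .
\]
So any subring of $\Q[g_2,g_3]$ over which $\sigma$ is Hurwitz contains $g_2/2$ and $6g_3$, hence contains $\Z[g_2/2,6g_3]$.

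\emph{A route that does not work.} One could use Corollary~\ref{Buchstaber_exponential_Hurwitz_specialization}, the relation $\wp(u;g_2,g_3)=-\wp(iu;g_2,-g_3)=-f_{\Bc}(iu)^{-2}$, and $\wp=-(\log\sigma)''$. Then $\sigma=u\exp(h)$ with $h=-\iint(\wp-u^{-2})$, and Lemma~\ref{Hurwitz_divisibility_lemma} (with $\psi=\exp$) would give integrality if $h$ were Hurwitz. But $h$ is not Hurwitz: the $u^4/4!$-coefficient of $h$ is $-g_2/10$. So I will not pass through the logarithm.

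\emph{Integrality, step 1.} Write
\[
\sigma(u)=\sum_{m,n\ge0}\alpha_{mn}\Bigl(\frac{g_2}{2}\Bigr)^m(2g_3)^n\frac{u^{4m+6n+1}}{(4m+6n+1)!}.
\]
Weierstrass's recursion \cite{Weierstrass1882} comes from the heat-type PDE for $\sigma$ in $(u,g_2,g_3)$. In this normalization it reads
\[
\alpha_{mn}=3(m+1)\alpha_{m+1,n-1}+\tfrac{16}{3}(n+1)\alpha_{m-2,n+1}-\tfrac13(2m+3n-1)(4m+6n-1)\alpha_{m-1,n},
\]
with $\alpha_{mn}\in\Z$, which is the classical statement quoted in the introduction. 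Since $6g_3=3\cdot(2g_3)$, the claim is equivalent to $3^n\mid\alpha_{mn}$ for all $m,n$.

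\emph{Step 2: $3$-adic induction.} I will induct on $4m+6n$ (equivalently on $m+n$ in a suitable order).
\begin{itemize}
\item The first term carries $3\cdot 3^{n-1}=3^n$.
\item The second carries $3^{-1}\cdot 3^{n+1}=3^n$.
\item For the third, modulo $3$ one has $(2m+3n-1)(4m+6n-1)\equiv 1-m^2$. This is divisible by $3$ whenever $3\nmid m$, and then the inductive hypothesis $3^n\mid\alpha_{m-1,n}$ suffices.
\end{itemize}

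\emph{Step 3: the case $m\equiv0 \pmod 3$ (main obstacle).} Here the factor $\tfrac13$ must be absorbed by $\alpha_{m-1,n}$, which requires $3^{n+1}\mid\alpha_{m-1,n}$ when $m-1\equiv2\pmod 3$. So the induction hypothesis has to be strengthened to a sharper $3$-adic valuation bound, of the type Bunkova predicts, e.g.\ $v_3(\alpha_{mn})\ge n+\epsilon(m)$ with an explicit correction depending on $m \bmod 3$. The real work is to find the weakest strengthening that is preserved by all three terms of the recursion; I would first tabulate $v_3(\alpha_{mn})$ for small $m,n$ to guess it.

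\emph{Fallback.} If no clean strengthening closes, I would instead derive a second-order ODE in divided-power form for $\sigma$ itself, analogous to \eqref{eq:Bc_exp_Bell}. This would come from the Buchstaber–Leikin heat-equation formalism \cite{Buchstaber_Leikin04}, and I would then run the Bell-polynomial induction of Theorem~\ref{Buchstaber_exponential_Hurwitz_theorem} directly over $\Z[g_2/2,6g_3]$.
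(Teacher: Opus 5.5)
\textbf{Verdict: genuine gap.} Your minimality argument is correct and is the same as the paper's. The integrality part, $\sigma\in\mathcal H_A[[u]]$ with $A=\Z[g_2/2,6g_3]$, is not proved. Step~3 is never carried out, though you yourself flag it as the main obstacle. The strategy you propose for it, a valuation bound $v_3(\alpha_{mn})\ge n+\epsilon(m)$ propagated term by term through the recursion, cannot work: the divisibilities you need come from cancellation between terms, not from the terms themselves.

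To see this, put $\beta_{mn}=3^{-n}\alpha_{mn}$. For $m\equiv 2\pmod 3$, which are exactly the indices where you need $3^{n+1}\mid\alpha_{mn}$, the recursion gives
\[
\beta_{mn}\equiv (n+1)\,\beta_{m-2,n+1}-\frac{2m+3n-1}{3}\,\beta_{m-1,n}\pmod 3 .
\]
So $3\mid\beta_{mn}$ is equivalent to a \emph{congruence} between two other coefficients, not to their divisibility.
\begin{enumerate}
\item At $(m,n)=(2,0)$, the quantities $(n+1)\beta_{0,1}$ and $\frac{2m+3n-1}{3}\beta_{1,0}$ are both equal to $-1$. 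Indeed $\alpha_{2,0}=-16+7=-9$ is a sum of two $3$-adic units.
\item The same thing happens at the next $3$-adic level. In $\alpha_{2,1}=621-576+468=27\cdot 19$, the last two terms have valuation exactly $2$, while the sum has valuation $3$.
\end{enumerate}
Closing your induction would therefore require finding and proving a whole system of congruences modulo powers of $3$ among the normalized coefficients. That is information of the same kind as Bunkova's valuation conjecture, which is open and which the paper deliberately avoids.

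Your fallback is not a concrete alternative either. The Buchstaber--Leikin heat equations are what produce the recursion of Step~1 in the first place. You also exhibit no equation for $\sigma$ of the type \eqref{eq:Bc_exp_Bell} whose divided-power coefficients are visibly in $A$.

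\textbf{The missing idea.} It is already in the route you discarded; you divided by the wrong function. Instead of $\sigma=u\,e^{h}$, write $\sigma=f\,e^{L}$ with $f=\wp(u;g_2,g_3)^{-1/2}$. By Corollary~\ref{Buchstaber_exponential_Hurwitz_specialization} with $g_3$ replaced by $-g_3$, we have $f\in\mathcal H_{\Z[3g_2,90g_3]}[[u]]\subseteq\mathcal H_A[[u]]$. From $(f')^2=1-\frac{g_2}{4}f^4-\frac{g_3}{4}f^6$ one gets
\[
L''=-\wp-(\log f)''=\frac{g_2}{4}f^2+\frac{g_3}{2}f^4=\frac{g_2}{2}\cdot\frac{f^2}{2!}+2(6g_3)\cdot\frac{f^4}{4!},
\]
which lies in $\mathcal H_A[[u]]$ by Lemma~\ref{Hurwitz_divisibility_lemma}. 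Since $\sigma/f$ is even with constant term $1$, we have $L(0)=L'(0)=0$. Hence $L$ is Hurwitz over $A$, so is $e^{L}$ (composition with $\exp$), and so is the product $\sigma=f\,e^{L}$.

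Here the pole of $\wp$ is cancelled by $-(\log f)''$ rather than by $u^{-2}$, and this absorbs the bad coefficient you found. The series $\log(f/u)$ has $u^4/4!$-coefficient $-3g_2/5$, so $L=h-\log(f/u)$ has coefficient $-g_2/10+3g_2/5=g_2/2$. This is the paper's proof, and it needs no $3$-adic analysis of Weierstrass's coefficients at all.
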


\begin{proof}
Put
\[
A:=\Z\!\left[\frac{g_2}{2},6g_3\right]
\]
and set
\[
f(u):=\frac{1}{\sqrt{\wp(u;g_2,g_3)}}.
\]
Applying Corollary~\ref{Buchstaber_exponential_Hurwitz_specialization} with $g_3$ replaced by $-g_3$, we obtain
\[
f\in\mathcal H_{\Z[3g_2,90g_3]}[[u]]
\subseteq
\mathcal H_A[[u]].
\]

Since both $\sigma(u;g_2,g_3)$ and $f(u)$ are odd and have linear
term $u$, their quotient is an even unit with constant term $1$.
Therefore
\[
L(u):=\log\frac{\sigma(u;g_2,g_3)}{f(u)}
\]
is even, and hence
\[
L(0)=L'(0)=0.
\]

Since $\wp=f^{-2}$, the Weierstrass equation gives
\[
(f')^2
=
1-\frac{g_2}{4}f^4-\frac{g_3}{4}f^6,
\qquad
f''
=
-\frac{g_2}{2}f^3-\frac{3g_3}{4}f^5.
\]
Using
\[
\frac{\sigma'}{\sigma}=\zeta,
\qquad
\zeta'=-\wp=-\frac{1}{f^2},
\]
we obtain
\[
\begin{aligned}
L''
&=
-\frac{1}{f^2}-\frac{f''}{f}
+\frac{(f')^2}{f^2}  \\
&=
\frac{g_2}{4}f^2+\frac{g_3}{2}f^4 \\
&=
\left(\frac{g_2}{2}\right)\frac{f^2}{2!}
+
2(6g_3)\frac{f^4}{4!}.
\end{aligned}
\]
Because $f\in\mathcal H_A[[u]]$ has vanishing constant term,
Lemma~\ref{Hurwitz_divisibility_lemma} implies that $f^2/2!$ and $f^4/4! \in\mathcal H_A[[u]]$. Thus
\[
L''\in\mathcal H_A[[u]].
\]
Since integration preserves Hurwitz series and
$L(0)=L'(0)=0$, it follows that
\[
L\in\mathcal H_A[[u]].
\]

Applying the composition statement of Lemma~\ref{Hurwitz_divisibility_lemma} to
$\psi(t)=e^t$, we obtain
\[
e^{L(u)}\in\mathcal H_A[[u]].
\]
Consequently,
\[
\sigma(u;g_2,g_3)
=
f(u)e^{L(u)}
\in\mathcal H_A[[u]].
\]

Finally,
\[
\sigma(u;g_2,g_3)
=
u-\frac{g_2}{2}\frac{u^5}{5!}
-6g_3\frac{u^7}{7!}
+\cdots.
\]
Hence every Hurwitz coefficient ring for the sigma-function contained
in $\Q[g_2,g_3]$ must contain both $g_2/2$ and $6g_3$. Therefore the ring $\Z\!\left[g_2/2,6g_3\right]$ is the minimal such ring.
\end{proof}

\printbibliography[heading=bibintoc]

\vspace{1.5em}
\noindent
\textit{Mikhail Kornev}\\
Steklov Mathematical Institute of Russian Academy of Sciences\\
\begin{tabular}{@{}l@{\ }l@{}}
Email: & \texttt{mkorneff@mi-ras.ru}
\end{tabular}

\vfill
\begin{center}
  \pgfornament[width=2.2cm]{188}
\end{center}

\end{document}